\newcommand{\RNum}[1]{\uppercase\expandafter{\romannumeral #1\relax}}
\let\f=\frac
\let\th=T
\let\pa=\partial
\newtheorem{theorem}{Theorem}[section]
\newtheorem{lemma}[theorem]{Lemma}
\newtheorem{proposition}[theorem]{Proposition}
\newtheorem{remark}[theorem]{Remark}
\numberwithin{equation}{section}
\numberwithin{equation}{section}
\begin{document}

\title[Limit of energy-critical G-L equation]{The limit theory of  energy-critical complex Ginzburg-Landau equation}

\author{Xing Cheng, Chang-Yu Guo and Yunrui Zheng}

\address[X. Cheng]{Key Laboratory of Hydrologic-Cycle and Hydrodynamic-System of Ministry of Water Resources and school of mathematics, Hohai University, Nanjing 210098, Jiangsu, P. R. China}
\email{{\tt chengx@hhu.edu.cn}}

\address[C.-Y. Guo]{Research Center for Mathematics and Interdisciplinary Sciences, Shandong University, 266237,  Qingdao and  Frontiers Science Center for Nonlinear Expectations, Ministry of Education, P. R. China}
\email{{\tt changyu.guo@sdu.edu.cn}}

\address[Y. Zheng]{ School of Mathematics, Shandong University, Shandong 250100, Jinan, P. R. China.}
\email{{\tt yunrui\_zheng@sdu.edu.cn}}

\thanks{X. Cheng has been partially supported by the NSF of Jiangsu Province (Grant No.~BK20221497). C.-Y. Guo is supported by the Young Scientist Program of the Ministry of Science and Technology of China (No.~2021YFA1002200), the National Natural Science Foundation of China (No.~12101362), the Taishan Scholar Project and the Natural Science Foundation of Shandong Province (No.~ZR2022YQ01). Y. Zheng was supported by NSFC (Grant No.~11901350), and The Fundamental Research Funds of Shandong University.}

\begin{abstract}
In this article, we study the limit behavior of solutions to an energy-critical complex Ginzburg-Landau equation.
Via energy method, we establish a rigorous theory of the zero-dispersion limit from energy-critical complex Ginzburg-Landau equation to energy-critical nonlinear heat equation in dimensions three and four for both the defocusing and focusing cases. Furthermore, we derive the invisicid limit of energy-critical complex Ginzburg-Landau equation to energy-critical nonlinear Schr\"odinger equation in dimension four for the focusing case.

\bigskip

\noindent \textbf{Keywords}: Complex Ginzburg-Landau equation, nonlinear heat equation, nonlinear Schr\"odinger equation, zero-dispersion limit, inviscid limit

\bigskip

\noindent \textbf{Mathematics Subject Classification (2020)} Primary: 35B40; Secondary: 35Q35, 35Q56
\end{abstract}

\maketitle

\section{Introduction}\label{se1}
In this article, we will consider the Cauchy problem for the energy-critical complex Ginzburg-Landau equation (CGL) in $\mathbb{R}^d$, $d=3, 4$,
\begin{align}\label{eq:g-l}
\begin{cases}
e^{-i\theta}v^\theta_t -  \Delta v^\theta +
\mu f(v^\theta)
= 0, \\
v^\theta(0,x) =  v^\theta_0(x),
\end{cases}
\end{align}
where $v^\theta$ is a complex-valued function of $(t,x) \in \mathbb{R}_+\times \mathbb{R}^d$,  $- \frac{\pi}2 < \theta < \frac{\pi}2$, $\mu = \pm 1$, and
$f \left(v^\theta \right) =
\left|v^\theta \right|^{\f4{d-2}}  v^\theta$.
When $\mu =1$, the equation \eqref{eq:g-l} is called \emph{defocusing}; while when $\mu=-1$, it is \emph{focusing}. Equation \eqref{eq:g-l} is the $L^2$ gradient flow of the energy functional
\begin{align*}
  E\left(v^\theta \right) := \int \frac12
  \left|\nabla v^\theta \right|^2 + \mu \frac{d-2}{2d} \left|v^\theta \right|^{\f{2d}{d-2}}
\,\mathrm{d}x.
\end{align*}
The CGL equation can be derived from the Navier-Stokes equation via multiple scaling methods in convection (see for instance \cite{NW}), and solutions to CGL equation can be derived as an amplitude of some good unknowns in the asymptotic expansion near some equilibria such as in Taylor-Coutte flow and plane Poiseuille flow (see \cite{AK,GJL,GL}). Compactness property of the Ginzburg-Landau type equation in dimension four was studied in \cite{WangCY-2004}.    

The global well-posedness (GWP) of \eqref{eq:g-l} in the defocusing case was proved by C. Huang and B. Wang \cite{HW}, where they used the energy induction method developed by J. Bourgain \cite{Bou} for the defocusing energy-critical nonlinear Schr\"odinger equation. In the focusing case $\mu = -1$, \eqref{eq:g-l} admits a stationary solution $W$, where
	\begin{align}\label{eq1.3v19}
		W(t, x) = W(x) :=  \frac1{ \left( 1 + \frac{  |x|^2}{ d(d-2) }  \right)^\frac{d-2}2 }.
	\end{align}
Motived by global well-posedness theory for energ-critical focusing  Schr\"odinger equations (NLS), very recently, the authors \cite{CGZ} studied \eqref{eq:g-l} in the focusing case, and established the global well-posedness  theory when the energy of the initial data is less than the energy of $W$ and the kinetic energy of the initial data is less than that of $W$, via  the concentration-compactness/rigidity method. 

In \eqref{eq:g-l}, the parameter $\theta \in \left( - \frac\pi2, \frac\pi2 \right)$ is a measure of the relative strength of the dispersive and dissipative effects in the equation: when $\theta \to 0, $ \eqref{eq:g-l} tends to the energy-critical nonlinear heat equation (NLH)
\begin{align}\label{eq:h}
\begin{cases}
u_t -  \Delta u + \mu f(u) = 0, \\
u(0,x) =  u_0(x).
\end{cases}
\end{align}
 When $|\theta| \to \frac\pi2$,
 \eqref{eq:g-l} tends to the energy-critical nonlinear Schr\"odinger equation (NLS)
\begin{align}\label{eq:s}
\begin{cases}
iv_t +  \Delta v + \mu f(v) = 0, \\
v(0,x) =  v_0(x).
\end{cases}
\end{align}

Before presenting the limit theory of solutions to energy-critical CGL, we would like to discuss some known results about the global well-posedness of the limit equations to energy-critical  CGL, that is energy-critical NLS and energy-critical  NLH.
\begin{enumerate}
\item[(i)]
The energy-critical nonlinear heat equation is the $L^2$ gradient flow of the energy
\begin{align*}
  E(u) := \int \frac12 |\nabla u|^2 + \mu \frac{d-2}{2d} |u|^{\f{2d}{d-2}}
\,\mathrm{d}x.
\end{align*}
The basic theory for the nonlinear heat equation can be found in \cite{QS,La}. Recently, the global well-posedness theroy for   energy-critical focusing nonlinear heat equations has been studied by S. Gustafson and D. Roxanas \cite{GR}.

\item[(ii)]
The global well-posedness theory for energy-critical nonlinear  Schr\"odinger equation has been studied intensively in the last two decades. The global well-posedness theory of energy-critical NLS in three and four dimensions is almost complete (see for instance \cite{Bou,CKSTT,D2019,KM,KV1,RV,V1}), except the GWP of non-radial focusing case in dimension three.
\end{enumerate}

Our main motivation of this article is to establish a rigorous theory for the limit $v^\theta \to u$ in an appropriate sense as $\theta \to 0$ in dimensions three and four. Meanwhile, we also consider the behavior of $v^\theta \to v$ as $|\theta |  \to \pi/2$ for the focusing case in dimension four. In our first main result, we consider the case when $\theta \to 0$, which corresponds to the zero-dispersion limit of \eqref{eq:g-l}. Based on the global well-posedness theory of CGL and NLH, we derive the following result on zero-dispersion limit.
\begin{theorem}[Zero-dispersion limit of energy-critical CGL]\label{thm:df_heat}
  Let $d= 3, 4$. Suppose that the initial data $v^\theta_0 \in H^1$ for \eqref{eq:g-l} and $ u_0 \in H^2$ for \eqref{eq:h} satisfy $\left\|v^\theta_0 - u_0 \right\|_{H^1} \le \varepsilon$ for a  sufficiently small universal $\varepsilon > 0 $. In the focusing case, we additionally assume that
  \begin{align*}
    \max\left\{E \left(v_0^\theta \right), E(u_0 )\right\} < E(W)
    \text{ and }
   \max\left\{\left\|\nabla v_0^\theta \right\|_{L^2}, \|\nabla u_0 \|_{L^2} \right\}< \|\nabla W\|_{L^2}.
   \end{align*}
   Then for any $T\in \left(0,\epsilon^2|e^{i\theta}-1|^{-2}\right)$, the solutions $v^\theta$ of \eqref{eq:g-l} and $u$ of \eqref{eq:h} satisfy
  \begin{align*}
    \left\|v^\theta - u \right\|_{L^\infty_t H^1_x([0, T] \times \mathbb{R}^d)}
    \le C_0   \left( \left\|v^\theta_0 - u_0 \right\|_{H^1} +  \left|e^{i\theta} - 1 \right|
    \left(T^{1/2} + 1 \right) \right),
  \end{align*}
  where $C_0 $ depends only on $v^\theta_0$ and $u_0$.
\end{theorem}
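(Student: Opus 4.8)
The plan is to treat \eqref{eq:g-l} as a perturbation of the heat flow \eqref{eq:h} and run a parabolic energy estimate on the difference. First I would rewrite CGL in heat form: since $v^\theta_t = e^{i\theta}\left(\Delta v^\theta - \mu f(v^\theta)\right)$, the difference $w := v^\theta - u$ solves
\begin{align*}
w_t = e^{i\theta}\Delta w + \left(e^{i\theta}-1\right)\Delta u - \mu e^{i\theta}\left(f(v^\theta) - f(u)\right) - \mu\left(e^{i\theta}-1\right)f(u), \qquad w(0) = v_0^\theta - u_0.
\end{align*}
The decisive feature is that all explicit error terms carry the prefactor $e^{i\theta}-1$ and involve only $u$ (through $\Delta u$ and $f(u)$), never $v^\theta$; since $u_0\in H^2$ and \eqref{eq:h} propagates $H^2$-regularity, these forcing terms are controlled by the data-dependent, $\theta$-independent norms of $u$.

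Next I would test the equation against $w$ and against $-\Delta w$ and take real parts. Because $-\frac\pi2<\theta<\frac\pi2$, the principal terms produce genuine dissipation,
\begin{align*}
\mathrm{Re}\int e^{i\theta}\,\Delta w\,\bar w\,\mathrm{d}x = -\cos\theta\,\|\nabla w\|_{L^2}^2, \qquad -\mathrm{Re}\int e^{i\theta}\,\Delta w\,\overline{\Delta w}\,\mathrm{d}x = -\cos\theta\,\|\Delta w\|_{L^2}^2,
\end{align*}
with $\cos\theta>0$, so after integrating in time I obtain $\|w\|_{L^\infty_t H^1}^2 + \cos\theta\int_0^T\!\big(\|\nabla w\|_{L^2}^2 + \|\Delta w\|_{L^2}^2\big)\,\mathrm{d}t$ on the left; note in particular that only $w_0\in H^1$ is needed, the quantity $\int_0^T\|\Delta w\|_{L^2}^2\,\mathrm{d}t$ being supplied by the dissipation itself. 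The main work is to absorb the difference of nonlinearities. Using the pointwise bound $|f(v^\theta)-f(u)|\lesssim\left(|v^\theta|^{\frac{4}{d-2}}+|u|^{\frac{4}{d-2}}\right)|w|$ together with the splitting $v^\theta = u + w$, the contributions of the $u$-part are estimated by the fixed, finite Sobolev norms of $u$ — directly through $H^2(\mathbb R^3)\hookrightarrow L^\infty$ when $d=3$, and through $H^2(\mathbb R^4)\hookrightarrow L^q$ for all $q<\infty$ together with interpolation when $d=4$ — producing terms of the form $A(t)\|w\|_{H^1}^2$ plus an arbitrarily small multiple of $\int_0^T\|\Delta w\|_{L^2}^2$. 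The contributions of the $w$-part carry a factor $\|w\|_{L^{2^*}}^{4/(d-2)}$ and are absorbed into $\cos\theta\int_0^T\|\Delta w\|_{L^2}^2$ once $\|w\|_{L^\infty_t H^1}$ is known to be small — which is guaranteed by a continuity (bootstrap) argument on $[0,T]$.

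With the nonlinear terms disposed of, I would close by Grönwall. The forcing contributions are handled by Cauchy–Schwarz and Young's inequality: for instance
\begin{align*}
\int_0^T|e^{i\theta}-1|\,\|\Delta w\|_{L^2}\|\Delta u\|_{L^2}\,\mathrm{d}t\le\frac{\cos\theta}4\int_0^T\|\Delta w\|_{L^2}^2\,\mathrm{d}t + C|e^{i\theta}-1|^2\int_0^T\|\Delta u\|_{L^2}^2\,\mathrm{d}t,
\end{align*}
and the second summand is $O\!\left(|e^{i\theta}-1|^2 T\right)$ because $u\in L^\infty_t H^2$; the term with $f(u)$ is treated identically. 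This is the origin of the factor $T^{1/2}$ in the conclusion, and the restriction $T<\epsilon^2|e^{i\theta}-1|^{-2}$ is exactly what makes $|e^{i\theta}-1|T^{1/2}<\epsilon$, keeping $\|w\|_{L^\infty_t H^1}$ small and thereby validating the bootstrap hypothesis used above. Crucially, the Grönwall exponent $\int_0^T A(t)\,\mathrm{d}t$ must be bounded by the \emph{global} spacetime bounds of $u$ and $v^\theta$ coming from the well-posedness theory (in the focusing case these are furnished by the assumptions $E<E(W)$ and $\|\nabla\cdot\|_{L^2}<\|\nabla W\|_{L^2}$), so that the constant $C_0$ depends only on $v_0^\theta$ and $u_0$ and not on $T$; this is essential since $T$ may be taken arbitrarily large as $\theta\to0$.

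I expect the main obstacle to be the control of the critical nonlinearity in the $\dot H^1$ estimate, where the dangerous quantity $\int_0^T\|\Delta w\|_{L^2}^2$ appears with an $O(1)$ coefficient. Separating it into a small $w$-part (absorbed using the bootstrap smallness of $\|w\|_{L^\infty_t H^1}$) and a fixed $u$-part (absorbed using $u\in H^2$ and, for $d=4$, a careful interpolation that leaves only an arbitrarily small multiple of $\|\Delta w\|_{L^2}^2$) is the delicate point; the companion difficulty is organizing the Grönwall estimate so that its constant is driven by $T$-independent global spacetime norms rather than by $T\cdot\sup_t(\cdot)$.
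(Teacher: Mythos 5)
Your route is genuinely different from the paper's: the paper never runs a parabolic energy/Gr\"onwall estimate on $w$ at all, but instead applies the $\theta$-uniform Strichartz estimate (Lemma \ref{lem:strichartz}) to the difference equation, splits $[0,T]$ into finitely many intervals on which $u$ (and, via Lemma \ref{prop:df_diff}, $w$) has small critical spacetime norm, and iterates interval by interval, so the final constant is of size $2^N$ with $N$ determined only by the global bound $\|u\|_{S(\mathbb{R}_+)}\le C(\|u_0\|_{\dot H^1})$ --- hence independent of $T$ and of $\theta$. Your scheme (testing with $w$ and $-\Delta w$, dissipation from $\cos\theta>0$, bootstrap plus Gr\"onwall) could in principle reach the same conclusion, but as written it has a genuine gap exactly at the step you yourself flag as delicate.

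The gap is this: to control the $u$-part of $f(v^\theta)-f(u)$ you propose the embeddings $H^2(\mathbb{R}^3)\hookrightarrow L^\infty$ and $H^2(\mathbb{R}^4)\hookrightarrow L^q$, i.e.\ you bound the Gr\"onwall density $A(t)$ by \emph{time-uniform} norms of $u$. That yields $\int_0^T A(t)\,\mathrm{d}t\lesssim T\,\sup_t\|u(t)\|_{H^2}^{c_d}$ and hence $C_0\sim e^{cT}$, which is precisely the $T\cdot\sup_t(\cdot)$ behavior you correctly say must be avoided, and it is fatal here because $T$ is allowed to be of size $\epsilon^2|e^{i\theta}-1|^{-2}\to\infty$ as $\theta\to 0$. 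Stating that the exponent ``must'' be controlled by global spacetime norms does not close this: none of the estimates you actually describe produces such control, so the key idea is missing, not merely a detail. What closes it is to H\"older the $u$-part against densities integrable on all of $\mathbb{R}_+$: for instance in $d=4$,
\begin{align*}
\int |u|^2|w||\Delta w|\,\mathrm{d}x\le \|u\|_{L^6_x}^2\|w\|_{L^6_x}\|\Delta w\|_{L^2}\lesssim \|u\|_{L^6_x}^2\|\nabla w\|_{L^2}^{2/3}\|\Delta w\|_{L^2}^{4/3},
\end{align*}
which after Young's inequality leaves $A(t)\lesssim \cos^{-2}\theta\,\|u(t)\|_{L^6_x}^{6}$, and $\int_0^\infty\|u\|_{L^6_x}^6\,\mathrm{d}t=\|u\|_{S(\mathbb{R}_+)}^6$ is finite by Theorem \ref{th2.4v19} (in $d=3$ the same works with $\|u(t)\|_{L^{10}_x}^{10}$); alternatively one can pair against $\|\Delta u(t)\|_{L^2}^2$, globally integrable by Propositions \ref{prop:df_heat} and \ref{prop:f_heat}. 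Two lesser points: the inverse powers of $\cos\theta$ your absorption produces make $C_0$ degenerate as $|\theta|\to\frac\pi2$, whereas the theorem asserts (and the paper's $\theta$-uniform Strichartz constants deliver) a bound uniform over all $\theta\in\left(-\frac\pi2,\frac\pi2\right)$; and the global-in-time $H^2$ propagation for \eqref{eq:h} that you take for granted is itself nontrivial at the critical exponent --- the paper proves it as Proposition \ref{prop:higher_heat} by the same interval-splitting Strichartz argument.
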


As far as we know, this seems to be the first result in the literature concerning the zero-dispersion of (variant of) the energy-critical nonlinear heat equation. As a direct consequence of Theorem \ref{thm:df_heat}, we infer that on any interval $[0, T] \subseteq \mathbb{R}_+$, if the initial datum satisfies the conditions in
Theorem \ref{thm:df_heat}, then 
$v^\theta \to u$ in $L_t^\infty H_x^1$ as $\theta \to 0$.
The argument is based on energy method together with the known results of the strong space-time bound of energy-critical CGL and NLH, which is similar to the argument of the proof of the inviscid limit of CGL. But we need less regularity than the assmption in \cite{HW} for their inviscid limit of energy-critical NLS.

We now consider the case when $|\theta | \to \frac\pi2$, which is the inviscid limit of \eqref{eq:g-l}. In this case, the dissipative effect becomes weaker, and it is interesting to compare the solutions to energy-critical NLS and CGL. The inviscid limit of CGL has been studied in \cite{BJ,HW, W0, Wu}. In particular, the inviscid limit of \eqref{eq:g-l} in the defocusing case was proved in \cite{HW,W0}. In our second main result, we complete the picture by establishing the inviscid limit of \eqref{eq:g-l} in the focusing case, based on the global well-posedness and scattering of four dimensional focusing energy-critical NLS in \cite{D2019},

\begin{theorem}[Inviscid limit of energy-critical CGL]
\label{thm:f_schrodinger}
  Let $d=4$ and $\mu = - 1 $. Suppose that the initial data $v^\theta_0 \in H^1$ for \eqref{eq:g-l} and $ v_0 \in H^3$ for \eqref{eq:s} satisfy $\left\|v^\theta_0 - v_0 \right\|_{H^1} \le \varepsilon$ for a universal $\varepsilon$ sufficiently small.  We also assume that
    \begin{align}\label{eq1.8v20}
    \max\left\{E \left(v_0^\theta \right), E(v_0 )\right\} < E(W)
    \text{ and }
   \max\left\{\left\|\nabla v_0^\theta \right\|_{L^2}, \|\nabla v_0 \|_{L^2} \right\}< \|\nabla W\|_{L^2}.
   \end{align}
%  \begin{align}\label{eq1.8v20}
%  E \left(v_0^\theta \right), E(v_0 ) < E(W)
% \text{  as well as }
%   \left\|\nabla v^\theta_0  \right\|_{L^2}, \|\nabla v_0 \|_{L^2} < \|\nabla W\|_{L^2}.
%   \end{align}
   Then for any $T\in \left(0,\epsilon|e^{i\theta} -i|^{-1}\right)$, the solutions $v^\theta$ to \eqref{eq:g-l} and $v$ to \eqref{eq:s} satisfy
  \begin{align*}
    \left\|v^\theta - v \right\|_{L^\infty_t H^1_x([0, T] \times \mathbb{R}^d)}
    \le C_1   \left( \left\|v^\theta_0 - v_0 \right\|_{H^1} +  \left|e^{i\theta} - i \right|(T + 1) \right),
  \end{align*}
  where $C_1 $ depends only on $v^\theta_0$ and $v_0$.
\end{theorem}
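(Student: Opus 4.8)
The plan is to derive a closed equation for the difference $w:=v^\theta-v$ and then to run a Strichartz-based perturbation/energy argument on $[0,T]$. Writing the CGL equation as $\partial_t v^\theta=e^{i\theta}\Delta v^\theta-\mu e^{i\theta}f(v^\theta)$ and the NLS equation in the matching first–order form $\partial_t v=i\Delta v-\mu i f(v)$ (the formal limit of the CGL flow as $e^{i\theta}\to i$), subtraction yields
\begin{align*}
  \partial_t w=e^{i\theta}\Delta w-\mu e^{i\theta}\big(f(v^\theta)-f(v)\big)+\big(e^{i\theta}-i\big)\Delta v-\mu\big(e^{i\theta}-i\big)f(v).
\end{align*}
The first term is the CGL linear evolution, whose real part $\operatorname{Re}e^{i\theta}=\cos\theta>0$ makes it dissipative, so it is only favorable (and can be discarded) in energy estimates; the second is the genuinely energy-critical nonlinear difference; the last two are source terms of size $O(|e^{i\theta}-i|)$ measuring the mismatch of the two flows. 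Since $\nabla\big[(e^{i\theta}-i)\Delta v\big]$ must be paired with $\nabla w$, one needs $\nabla\Delta v\in L^2$, which already explains the hypothesis $v_0\in H^3$: near the inviscid limit $\cos\theta\to0$ the parabolic smoothing degenerates and cannot be used to trade a derivative, so the source must be controlled \emph{absolutely} in $H^1$, costing one extra derivative compared with the zero-dispersion case of Theorem \ref{thm:df_heat}.

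Next I would collect the a priori global space-time bounds. By \eqref{eq1.8v20} both data lie below the ground-state threshold, so the global well-posedness and scattering theory for four-dimensional focusing energy-critical NLS of \cite{D2019} gives $v$ a finite global scattering norm, and persistence of regularity propagates $v\in L^\infty_t H^3_x$, whence $\Delta v, f(v)\in L^\infty_t H^1_x$ with bounds depending only on $v_0$; likewise the CGL theory of \cite{CGZ} furnishes a finite global space-time bound for $v^\theta$ depending only on $v^\theta_0$. The crucial consequence is that $[0,T]$ can be partitioned into a finite number $J=J(v^\theta_0,v_0)$ of subintervals $I_1,\dots,I_J$, with $J$ independent of both $T$ and $\theta$, on each of which the scattering norms of $v^\theta$ and $v$ are smaller than a fixed small absolute constant.

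On each subinterval I would run the Strichartz estimate for $w$, keeping $w$ and $\nabla w$ in genuine space-time norms rather than merely in $L^\infty_t H^1_x$. The part of $f(v^\theta)-f(v)$ proportional to $w$ (resp.\ $\nabla w$) is bounded by the small scattering norms of $v^\theta,v$ on $I_k$ and absorbed into the left-hand side, while the remaining contributions are linear in $w$ with coefficients built from $v^\theta,v$ and are controlled similarly; the dissipativity of $e^{i\theta}\Delta$ only helps. This produces a recursive bound $\|w\|_{X(I_k)}\le C\big(\|w(t_k)\|_{H^1}+|e^{i\theta}-i|\,|I_k|\,M\big)$, where $X$ is the relevant Strichartz–energy norm and $M$ bounds $\|\Delta v\|_{L^\infty_tH^1_x}+\|f(v)\|_{L^\infty_tH^1_x}$. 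Iterating over the $J$ subintervals yields a constant $C^{J}$ that is finite and independent of $T$ and $\theta$; summing the source contributions and using $\sum_k|I_k|=T$ gives exactly
$\|w\|_{L^\infty_tH^1_x([0,T]\times\mathbb{R}^d)}\le C_1\big(\|v^\theta_0-v_0\|_{H^1}+|e^{i\theta}-i|(T+1)\big)$. The assumption $\|v^\theta_0-v_0\|_{H^1}\le\varepsilon$ together with the range $T<\epsilon|e^{i\theta}-i|^{-1}$, which forces $|e^{i\theta}-i|T<\epsilon$, keeps $w$ small throughout and hence keeps the entire iteration inside the perturbative regime where the small-norm nonlinear estimates are valid.

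The main obstacle is the energy-critical nonlinear difference term. Because $v^\theta$ is only assumed to lie in the energy space $H^1$ (and not in $L^\infty$), a naive Gronwall estimate in $H^1$ fails: the critical contribution $\int|v^\theta|^2|\nabla w|^2$ cannot be dominated by $\|w\|_{H^1}^2$ times a time-integrable kernel. Circumventing this is precisely what forces the Strichartz framework and the partition into finitely many subintervals of small scattering norm, so that the critical terms are absorbed by \emph{smallness} rather than by dissipation; the latter route is unavailable here since, in contrast to the heat limit, the smoothing gain $\cos\theta\,\|\Delta w\|_{L^2}^2$ vanishes as $\theta\to\tfrac{\pi}{2}$. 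A secondary technical point is establishing and propagating the $H^3$ regularity of $v$ required to close the source terms uniformly in time.
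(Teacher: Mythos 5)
Your proposal follows essentially the same route as the paper: the identical difference equation for $\tilde{w}=v^\theta-v$, the same a priori inputs (Dodson's GWP/scattering below the ground state for the $4$d focusing NLS, the global space-time bound for CGL from \cite{CGZ}, and propagation of $H^3$ regularity for $v$ so that $\nabla\Delta v\in L^\infty_t L^2_x$), and the same Strichartz-plus-small-scattering-interval iteration in which the nonlinear difference is absorbed by smallness and the $O\left(\left|e^{i\theta}-i\right|\right)$ source terms accumulate to $\left|e^{i\theta}-i\right|(T+1)$ under the constraint $T<\epsilon\left|e^{i\theta}-i\right|^{-1}$. Your observation that the degeneration of parabolic smoothing as $|\theta|\to\frac{\pi}{2}$ forces absolute $L^\infty_t\dot{H}^3_x$ control of the source (hence the factor $T$ rather than $T^{1/2}$, and $H^3$ rather than $H^2$ data) is exactly the modification the paper makes relative to the zero-dispersion case.
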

\begin{remark}
As one may observe from our later proofs, this result can be easily extended to dimension three if the GWP and scattering theory of focusing 3d energy-critical NLS in non-radial case were proven.
\end{remark}

This result reveals that on any $[0, T] \subseteq \mathbb{R}_+$, if the initial datum satisfies the conditions in Theorem \ref{thm:f_schrodinger}, then we have $ v^\theta \to v \text{ in } L_t^\infty H_x^1([0, T] \times \mathbb{R}^d), \text{ as } |\theta | \to \frac\pi2$.
In the proof, the assumption \eqref{eq1.8v20} is necessary to make sure solutions of the energy-critical CGL and NLS in the focusing case are global and have finite space-time norm. The argument is similar to the argument in \cite{BJ,HW, W0, Wu}.

	Our notations are standard. We use $X \lesssim Y$ when $X \le CY$ for some constant $C> 0$ and $X \sim Y$ when $X \lesssim Y \lesssim X$. We write $C=C(a,b,\cdots)$ meaning that $C$ is a constant depending on $a,b,\cdots$. 
	
	For any space-time slab $I \times \mathbb{R}^d$, we use $L_t^q L_x^r(I \times \mathbb{R}^d)$ to denote the space of functions $u: I \times \mathbb{R}^d \to \mathbb{C}$, whose norm satisfies
	$\|u\|_{L_t^q L_x^r (I \times \mathbb{R}^d)} : = \left( \int_I \|u(t) \|_{L_x^r(\mathbb{R}^d) }^q \,\mathrm{d}t \right)^\frac1q < \infty$,
	with the usual modification when $q$ or $r$ are equal to infinity. When $q = r$, we abbreviate $L_t^q L_x^q$ as $L_{t,x}^q$.
	
	We define the $S(I)-$norm of $u$ on the time slab $I$ to be
	\[
\|u\|_{S(I)} : = \left( \int_I \int_{\mathbb{R}^d } |u(t,x)|^\frac{2(d+2)}{d-2}  \,\mathrm{d}x \mathrm{d}t \right)^\frac{d-2}{2(d+2)}.
\]

\section{Energy-dissipation estimates}\label{se2}
In this section, we will first recall some basic theory for the CGL. Then we establish the energy-dissipation estimates for energy-critical CGL and NLH.

\subsection{Basic results for CGL}
In this subsection, we recall the Strichartz estimate of CGL, the variational estimates,  as well as the global well-posedness of energy-critical CGL in \cite{CGZ,HW}, which will be our start point for the limit theory.
\begin{lemma}[Strichartz estimate, \cite{CGZ}]
	\label{lem:strichartz}
	If $\tilde{v} ^\theta $ satisfies
 \begin{align*}
     e^{-i \theta} \pa_t\tilde{v}^\theta  - \Delta \tilde{v}^\theta  = F,
 \end{align*}
 on some interval $[0, T_0 ]$, then  for $t_0 \in [0, T_0]$, we have
 \begin{align*}
 \left\|\tilde{v}^\theta   \right\|_{L_t^q L_x^r }
 \le  \left\| \tilde{v}^\theta (t_0)  \right\|_{L^2} + C \|F\|_{L_t^{\tilde{q}' } L_x^{\tilde{r}'}},
 \end{align*}
 where $ 2\le q, r, \tilde{q},\tilde{r} \le \infty$, $\frac{2}q + \frac{d}r = \frac2{\tilde{q}} + \frac{d}{\tilde{r}} =  \frac{d}2$, with $(q,r,d) , \left(\tilde{q}, \tilde{r}, \tilde{d} \right) \ne (2, \infty, 2)$, and $C$ is a constant depending only on $q,r, \tilde{q}, \tilde{r}, d$.
\end{lemma}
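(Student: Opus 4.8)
The plan is to reduce the stated inequality to the homogeneous and inhomogeneous Strichartz estimates for the Ginzburg--Landau propagator, and to obtain those from the Keel--Tao machinery driven by two $\theta$-uniform kernel bounds. First I would multiply the equation by $e^{i\theta}$ to rewrite it as $\partial_t\tilde v^\theta = e^{i\theta}\Delta\tilde v^\theta + e^{i\theta}F$, and introduce the semigroup $S_\theta(t):=e^{te^{i\theta}\Delta}$, i.e.\ the Fourier multiplier with symbol $e^{-te^{i\theta}|\xi|^2}$. Since $-\tfrac\pi2<\theta<\tfrac\pi2$ we have $\cos\theta>0$, so $S_\theta(t)$ is a genuine (forward) analytic semigroup for $t\ge0$; backward propagation is ill posed, so the estimate is understood for $t\ge t_0$, with $t_0=0$ the case used in the applications. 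Duhamel's formula gives $\tilde v^\theta(t)=S_\theta(t-t_0)\tilde v^\theta(t_0)+\int_{t_0}^t S_\theta(t-s)e^{i\theta}F(s)\,\mathrm{d}s$, and because $|e^{i\theta}|=1$ it suffices to prove the homogeneous bound $\|S_\theta(\cdot-t_0)g\|_{L^q_tL^r_x}\lesssim\|g\|_{L^2}$ together with the inhomogeneous bound $\big\|\int_{t_0}^t S_\theta(t-s)G(s)\,\mathrm{d}s\big\|_{L^q_tL^r_x}\lesssim\|G\|_{L^{\tilde q'}_tL^{\tilde r'}_x}$.

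The two inputs behind everything are an energy estimate and a dispersive estimate, both uniform in $\theta$. For the first, Plancherel and $|e^{-te^{i\theta}|\xi|^2}|=e^{-t\cos\theta|\xi|^2}\le1$ yield $\|S_\theta(t)f\|_{L^2}\le\|f\|_{L^2}$ for $t\ge0$. For the second, I would compute the kernel explicitly, $S_\theta(t)f=K_t*f$ with $K_t(x)=(4\pi e^{i\theta}t)^{-d/2}\exp\!\big(-|x|^2/(4e^{i\theta}t)\big)$; using $|e^{i\theta}|=1$ and $\mathrm{Re}\,(e^{-i\theta})=\cos\theta>0$ one finds $|K_t(x)|=(4\pi t)^{-d/2}e^{-|x|^2\cos\theta/(4t)}$, hence $\|K_t\|_{L^\infty}=(4\pi t)^{-d/2}$ and $\|S_\theta(t)f\|_{L^\infty}\le C t^{-d/2}\|f\|_{L^1}$. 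The decay rate $t^{-d/2}$ is independent of $\theta$ and interpolates exactly between the heat ($\theta=0$) and Schr\"odinger ($|\theta|\to\pi/2$) regimes. Riesz--Thorin interpolation between the two endpoints then gives $\|S_\theta(t)f\|_{L^r}\le C t^{-d(\frac12-\frac1r)}\|f\|_{L^{r'}}$ for $2\le r\le\infty$, which is precisely the dispersive estimate with exponent $\sigma=d/2$ matching the admissibility relation $\frac2q+\frac dr=\frac d2$.

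For the homogeneous estimate I would check the Keel--Tao hypotheses directly: with $U(t)=S_\theta(t)$ one has $U(s)^*=e^{se^{-i\theta}\Delta}$, so $U(t)U(s)^*=e^{(te^{i\theta}+se^{-i\theta})\Delta}$; since $|te^{i\theta}+se^{-i\theta}|^2=(t+s)^2\cos^2\theta+(t-s)^2\sin^2\theta\ge(t-s)^2$, the composite kernel still satisfies $\|U(t)U(s)^*\|_{L^1\to L^\infty}\le C|t-s|^{-d/2}$, and Keel--Tao yields $\|S_\theta(t)g\|_{L^q_tL^r_x}\lesssim\|g\|_{L^2}$ for every admissible pair, including the endpoint $q=2$ (genuine for $d=3,4$). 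I expect the inhomogeneous estimate to be the main obstacle: because the semigroup is not unitary, the Duhamel kernel $S_\theta(t-s)$ does not factor as $V(t)V(s)^*$, so the inhomogeneous part of Keel--Tao cannot be quoted verbatim. I would instead note that the evolution kernel $W(t,s):=S_\theta(t-s)$ for $t>s$ itself obeys $\|W(t,s)\|_{L^2\to L^2}\le1$ and $\|W(t,s)\|_{L^1\to L^\infty}\le C|t-s|^{-d/2}$ (here $|(t-s)e^{i\theta}|=t-s$), and then run the bilinear, dyadic-in-$|t-s|$ argument of Keel--Tao directly on the causal form $\iint_{s<t}\langle S_\theta(t-s)G(s),H(t)\rangle\,\mathrm{d}s\,\mathrm{d}t$. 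That argument uses only the fixed-time $L^2\to L^2$ and $L^1\to L^\infty$ bounds of the kernel together with the composition law $S_\theta(t-s)=S_\theta(t-\tau)S_\theta(\tau-s)$, and never invokes the group identity $U(t)U(s)^*=U(t-s)$, so it applies as is. Inserting the two bounds into the Duhamel formula and using $|e^{i\theta}|=1$ produces the stated inequality with $C=C(q,r,\tilde q,\tilde r,d)$. The crux throughout is the loss of unitarity: every step that for Schr\"odinger would follow from $U(t)^*=U(-t)$ must be re-derived from the two $\theta$-uniform kernel bounds, the decisive quantitative input being $|te^{i\theta}+se^{-i\theta}|\ge|t-s|$, which preserves the sharp dispersive decay uniformly in $\theta$.
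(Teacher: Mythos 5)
The paper itself contains no proof of this lemma: it is imported verbatim from \cite{CGZ}, and ultimately rests on the Strichartz theory for the semigroup $e^{te^{i\theta}\Delta}$ developed in \cite{HW}. Your reconstruction is correct and is essentially the argument in those sources, so there is no genuine divergence to report, only confirmation. You correctly isolate the two $\theta$-uniform inputs: the $L^2$ contraction from $|e^{-te^{i\theta}|\xi|^2}|=e^{-t\cos\theta|\xi|^2}\le 1$, and the kernel bound $|K_t(x)|=(4\pi t)^{-d/2}e^{-\cos\theta|x|^2/(4t)}$, whose $L^\infty$ norm is $(4\pi t)^{-d/2}$ independently of $\theta$; this is the right pair to use, since (as your argument implicitly respects) the $L^1\to L^1$ bound $\|K_t\|_{L^1}=(\cos\theta)^{-d/2}$ degenerates as $|\theta|\to\pi/2$, so only the $L^2\to L^2$ and $L^1\to L^\infty$ bounds and their Riesz--Thorin interpolants may enter. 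Your computation $|te^{i\theta}+se^{-i\theta}|^2=(t+s)^2\cos^2\theta+(t-s)^2\sin^2\theta\ge(t-s)^2$ is exactly the quantitative fact that makes $\|U(t)U(s)^*\|_{L^1\to L^\infty}\lesssim|t-s|^{-d/2}$ uniform in $\theta$, and with $\sigma=d/2>1$ for $d=3,4$ the Keel--Tao theorem applies including the endpoint $q=2$. You also put your finger on the one genuinely delicate point: since $S_\theta$ is not a unitary group, the Duhamel kernel $S_\theta(t-s)$ is not $U(t)U(s)^*$, so the retarded estimate with unequal pairs $(q,r)\ne(\tilde q,\tilde r)$ cannot be quoted from Keel--Tao verbatim. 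Your fix---running the causal bilinear, dyadic-in-$|t-s|$ argument using only the fixed-time bounds and the factorization $S_\theta(t-s)=S_\theta(t-\tau)S_\theta(\tau-s)$ with $\tau$ chosen in the gap between dyadic time blocks---is sound (note also that for non-endpoint diagonal pairs the cheaper route is the dispersive bound plus Hardy--Littlewood--Sobolev, since $d(\tfrac12-\tfrac1r)=\tfrac2q$, and the $L^\infty_tL^2_x$ bound follows by dualizing against the adjoint semigroup $S_{-\theta}$); this is how the retarded estimates are handled in the dissipative literature. Two cosmetic remarks: your caveat that the estimate propagates only forward from $t_0$ is consistent with the paper's usage, where $t_0=t_{j-1}$ is always the left endpoint of $I_j$; and for general admissible $(q,r)$ the homogeneous term should carry a constant $C$ as well---the bare $\left\|\tilde v^\theta(t_0)\right\|_{L^2}$ in the statement is exact only for $(q,r)=(\infty,2)$, a slight inaccuracy of the paper's formulation rather than of your proof.
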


\begin{lemma}[Variational estimates, \cite{CGZ}]\label{le2.1v3}

For $\mu = - 1$, if
\begin{align*}
E \left(v_0^\theta \right) \le ( 1 - \delta_0) E(W),
\left\|\nabla v_0^\theta  \right \|_{L^2} <  \|\nabla W\|_{L^2},
\delta_0 > 0,
\end{align*}
then the solution of \eqref{eq:g-l} satisfies
\begin{align*}
    E \left(v^\theta (t) \right) \sim \|\nabla v^\theta (t) \|_{L_x^2}.
\end{align*}
\end{lemma}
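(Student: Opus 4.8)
The plan is to recognize this as a variational trapping estimate of the Kenig--Merle type and to prove it in two stages: a static comparison valid on the sub-threshold region, and a dynamic argument showing the flow never leaves that region. Throughout write $2^\ast=\frac{2d}{d-2}$ and recall that in the focusing case $\mu=-1$ the energy reads
\[
E(v)=\frac12\|\nabla v\|_{L^2}^2-\frac{d-2}{2d}\|v\|_{L^{2^\ast}}^{2^\ast}.
\]
The upper bound $E(v)\le\frac12\|\nabla v\|_{L^2}^2$ is immediate, since the potential term is subtracted; this gives one half of the claimed equivalence (understood as $E(v^\theta(t))\sim\|\nabla v^\theta(t)\|_{L^2}^2$) with a universal constant. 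For the reverse inequality I would invoke the sharp Sobolev inequality $\|v\|_{L^{2^\ast}}\le C_d\|\nabla v\|_{L^2}$, whose optimizer is $W$, and substitute the resulting bound $\|v\|_{L^{2^\ast}}^{2^\ast}\le C_d^{2^\ast}\|\nabla v\|_{L^2}^{2^\ast}$ to obtain $E(v)\ge g\bigl(\|\nabla v\|_{L^2}^2\bigr)$, where $g(y)=\frac12 y-\frac{d-2}{2d}C_d^{2^\ast}y^{d/(d-2)}$. A direct computation shows that $g$ increases to a unique maximum at $y^\ast=\|\nabla W\|_{L^2}^2$ with $g(y^\ast)=E(W)$, using the Pohozaev/Nehari identities $\|\nabla W\|_{L^2}^2=\|W\|_{L^{2^\ast}}^{2^\ast}$ and $E(W)=\frac1d\|\nabla W\|_{L^2}^2$.

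The dynamic step is to show that $\|\nabla v^\theta(t)\|_{L^2}^2$ stays strictly below $y^\ast$ for all $t$, so that one remains on the increasing branch of $g$. First I would establish that the energy is non-increasing along the CGL flow: writing the equation as $\partial_t v^\theta=-e^{i\theta}E'(v^\theta)$ with $E'(v)=-\Delta v+\mu f(v)$, a differentiation gives
\[
\frac{d}{dt}E(v^\theta(t))=\mathrm{Re}\int\overline{E'(v^\theta)}\,\partial_t v^\theta\,dx=-\cos\theta\,\|E'(v^\theta)\|_{L^2}^2\le 0,
\]
where $\cos\theta>0$ because $\theta\in\bigl(-\frac\pi2,\frac\pi2\bigr)$; hence $E(v^\theta(t))\le E(v_0^\theta)\le(1-\delta_0)E(W)$ for all $t$. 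Since $(1-\delta_0)E(W)<g(y^\ast)$, the sub-level set $\{y\ge0:g(y)\le(1-\delta_0)E(W)\}$ misses a neighbourhood of $y^\ast$ and splits into a low branch $[0,y_1]$ and a high branch $[y_2,\infty)$ with $y_1<y^\ast<y_2$. As $g\bigl(\|\nabla v^\theta(t)\|_{L^2}^2\bigr)\le E(v^\theta(t))\le(1-\delta_0)E(W)$, the continuous map $t\mapsto\|\nabla v^\theta(t)\|_{L^2}^2$ (continuity coming from the $H^1$ well-posedness of CGL recalled above) always takes values in this set, and since it starts in the low branch by hypothesis a standard continuity argument confines it to $[0,y_1]$ for all $t$, with $y_1$ bounded away from $y^\ast$ quantitatively in terms of $\delta_0$.

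It then remains to note that on $\{y\le y_1\}$ one has $g(y)/y=\frac12-\frac{d-2}{2d}C_d^{2^\ast}y^{2/(d-2)}\ge\frac12-\frac{d-2}{2d}C_d^{2^\ast}y_1^{2/(d-2)}=:c'>0$, the positivity holding because $y_1$ lies strictly below the zero $y_0>y^\ast$ of this affine-in-$y^{2/(d-2)}$ expression. Consequently $E(v^\theta(t))\ge g\bigl(\|\nabla v^\theta(t)\|_{L^2}^2\bigr)\ge c'\|\nabla v^\theta(t)\|_{L^2}^2$, which together with the trivial upper bound yields $E(v^\theta(t))\sim\|\nabla v^\theta(t)\|_{L^2}^2$ with implicit constants depending only on $\delta_0$. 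The step I expect to require the most care is justifying the energy-dissipation identity at $H^1$ regularity: $E'(v^\theta)=-\Delta v^\theta+\mu f(v^\theta)$ need not lie in $L^2$ a priori, so the formal differentiation above must be legitimized, either through a density/approximation argument exploiting the parabolic smoothing of the CGL semigroup (valid since $\cos\theta>0$) or by appealing directly to the energy identity already contained in the well-posedness theory of \cite{CGZ,HW}. The remaining work is the routine concavity bookkeeping for the scalar function $g$.
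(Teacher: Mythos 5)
The paper offers no proof of this lemma at all: it is quoted as a black box from \cite{CGZ}, so the only meaningful comparison is with the standard argument underlying such statements. Your proposal is precisely that argument --- the Kenig--Merle energy-trapping scheme of \cite{KM} (sharp Sobolev coercivity for the scalar function $g$, plus a continuity-in-time confinement to the low branch), with the CGL-specific and correct adaptation that conservation of energy is replaced by the monotonicity $\frac{d}{dt}E\left(v^\theta\right)=-\cos\theta\,\left\|\partial_t v^\theta\right\|_{L^2}^2\le 0$ coming from the gradient-flow structure --- and it is sound, including your reading of the conclusion as $E\left(v^\theta(t)\right)\sim\left\|\nabla v^\theta(t)\right\|_{L^2}^2$ (the unsquared norm in the statement cannot be right, since it fails in the small-data regime) and your proposed justification of the dissipation identity at $H^1$ regularity via parabolic smoothing for $t>0$ together with $C_t^0\dot H^1_x$ continuity of the energy at $t=0$.
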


\begin{theorem}[GWP of energy-critical CGL \cite{CGZ,HW}]\label{th2.3v19}
Fix $d=3,4$ and $v_0^\theta \in \dot{H}^1$. Assume one of the following two conditions is satisfied:
\begin{enumerate}
    \item $\mu = 1$,
    \item $\mu = - 1$,  $
    \left\| v_0^\theta  \right\|_{\dot{H}^1} < \| W\|_{\dot{H}^1}$ and $E \left(v_0^\theta \right) < E(W)$.
\end{enumerate}
Then the solution $v^\theta$ to \eqref{eq:g-l} is global and satisfies 
\begin{align}\label{eq2.3v15}
 \left\|v^\theta \right\|_{S(\mathbb{R}_+ ) } \le C=C
 \left( \left\|v_0^\theta \right\|_{\dot{H}^1} \right).
\end{align}

\end{theorem}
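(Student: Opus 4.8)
The plan is to prove the statement in three stages: a critical local theory producing a blow-up criterion phrased through the scattering norm $\|\cdot\|_{S}$; an a priori control of the energy and kinetic energy coming from the dissipative structure of \eqref{eq:g-l}; and finally the uniform global spacetime bound \eqref{eq2.3v15}. For the first stage I would rewrite \eqref{eq:g-l} in Duhamel form against the CGL semigroup $e^{t e^{i\theta}\Delta}$, which for $|\theta| < \frac\pi2$ has Fourier symbol $e^{-t e^{i\theta}|\xi|^2}$ with $\operatorname{Re}\left(e^{i\theta}\right) = \cos\theta > 0$; it is thus a holomorphic semigroup interpolating between the heat propagator ($\theta = 0$) and the Schr\"odinger propagator ($|\theta| \to \frac\pi2$). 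Since the Strichartz estimate of Lemma \ref{lem:strichartz} holds uniformly in $\theta$, a standard contraction argument in the $\dot H^1$-admissible Strichartz spaces yields local well-posedness for $v_0^\theta \in \dot H^1$, a small-data global theory, and the continuation criterion that the solution extends globally as long as $\|v^\theta\|_{S(I)}$ is finite on bounded subintervals. I would also record the long-time perturbation lemma here, as it is the workhorse of the last stage.

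For the a priori bounds, I would pair \eqref{eq:g-l} with $\overline{\partial_t v^\theta}$, integrate, and take real parts to obtain the dissipation identity
\begin{align*}
\frac{\mathrm{d}}{\mathrm{d}t} E\left(v^\theta(t)\right) = -\cos\theta \int \left|\partial_t v^\theta\right|^2 \,\mathrm{d}x \le 0,
\end{align*}
so that $E\left(v^\theta(t)\right) \le E\left(v_0^\theta\right)$ for all $t \ge 0$. In the defocusing case $\mu = 1$ this directly bounds $\|\nabla v^\theta(t)\|_{L^2}$. In the focusing case $\mu = -1$, I would combine the monotonicity with the variational trapping of Lemma \ref{le2.1v3}: a continuity-in-time argument shows that the sub-threshold conditions $\|v_0^\theta\|_{\dot H^1} < \|W\|_{\dot H^1}$ and $E\left(v_0^\theta\right) < E(W)$ persist, so $\|\nabla v^\theta(t)\|_{L^2}$ stays strictly below $\|\nabla W\|_{L^2}$ and $E\left(v^\theta\right) \sim \|\nabla v^\theta\|_{L^2}^2$ remains bounded.

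The heart of the matter is the uniform spacetime bound, which I would obtain by the concentration-compactness/rigidity scheme of \cite{CGZ,HW}. Define $E_c$ to be the supremum of energy levels below which every solution has finite $S$-norm, and argue by contradiction: if $E_c$ were finite, a profile decomposition adapted to the $\theta$-dependent CGL propagator, together with the perturbation lemma from the first stage, produces a minimal-energy critical element whose trajectory is pre-compact modulo the scaling symmetry. The rigidity step then uses the dissipative structure—the strict energy monotonicity above forces $\partial_t v^\theta \equiv 0$ along a compact orbit, which combined with the variational lower bound forces the critical element to be trivial—yielding the contradiction. In the defocusing case one may instead run Bourgain's induction-on-energy as in \cite{HW}.

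The main obstacle is exactly this final compactness/rigidity step. Unlike NLS, equation \eqref{eq:g-l} is neither conservative nor time-reversible and carries fewer symmetries (no Galilean invariance, and no backward flow), so the profile decomposition must be built for the $\theta$-dependent semigroup and the rigidity argument cannot rely on time translation; the dissipation both helps, by pinning the orbit, and hinders, by removing two-sided evolution. A secondary but essential point is uniformity in $\theta$: to guarantee that the constant $C$ in \eqref{eq2.3v15} depends only on $\|v_0^\theta\|_{\dot H^1}$, the Strichartz constants and the perturbation theory must be uniform in $\theta$, which I would inherit from the $\theta$-independent formulation of Lemma \ref{lem:strichartz}.
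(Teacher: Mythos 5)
The paper gives no proof of Theorem \ref{th2.3v19}: it is quoted as a known result, the defocusing case from \cite{HW} (Bourgain-style induction on energy) and the focusing sub-threshold case from \cite{CGZ} (concentration-compactness/rigidity), and your outline --- uniform-in-$\theta$ Strichartz local theory, the dissipation identity $\frac{\mathrm{d}}{\mathrm{d}t}E\left(v^\theta\right) = -\cos\theta \left\|\partial_t v^\theta\right\|_{L^2}^2 \le 0$ with variational trapping via Lemma \ref{le2.1v3}, then a minimal-counterexample/rigidity argument --- is precisely the strategy of those references. So your proposal is consistent with the source proofs; the only caveat is that the profile-decomposition and rigidity steps, which you correctly single out as the heart of the matter, remain a program statement here rather than an executed argument, exactly as they are black-boxed by the paper itself.
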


\subsection{Energy-dissipation estimates for energy-critical CGL and NLH }

In this subsection, we first give a global energy-dissipation control of \eqref{eq:g-l}, based on Theorem \ref{th2.3v19}, in both defocusing and focusing cases. Then we present the similar results for NLH.

\begin{proposition}[Energy-dissipation estimates for  \eqref{eq:g-l} in the defocusing case] \label{prop:df_gl}
 Fix $d=3, 4$ and $ v^\theta_0 \in \dot{H}^1$. Then the solution $v^\theta$ of \eqref{eq:g-l} satisfies $v^\theta \in C_t^0 \dot{H}^1_x \cap L^2_t \dot{W}_x^{1,\f{2d}{d-2}}$ and
 \begin{align*}
   E \left(v^\theta(t) \right)
   + \cos \theta\int \left|\Delta v^\theta \right|^2 \,\mathrm{d}x \mathrm{d}t
   + \cos \theta  \left\|v^\theta \right\|_{S(\mathbb{R}_+) }^\frac{2(d+2)}{d - 2 }
   \lesssim  E \left(v^\theta_0 \right).
  \end{align*}
  If $v^\theta_0 \in L^2$, then we have
  \begin{align*}
    \left\|v^\theta(t) \right\|_{L^2_x}^2 + \cos \theta \int_0^t
    \left\|\nabla v^\theta( \tau)  \right\|_{L_x^2}^2 \,\mathrm{d} \tau
    \lesssim \left\|v^\theta_0 \right\|_{L^2}^2.
  \end{align*}
\end{proposition}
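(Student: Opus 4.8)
The plan is to run two energy identities obtained by testing \eqref{eq:g-l} against well-chosen multipliers, and then to upgrade the resulting $L^2_t\dot H^2_x$ dissipation into the space-time $S$-bound by interpolation and Sobolev embedding. Throughout I work in the defocusing case $\mu=1$, where the sign of the nonlinearity is favorable, and I use that $\cos\theta>0$ since $\theta\in(-\pi/2,\pi/2)$. For the $L^2$ bound I rewrite the equation as $v^\theta_t=e^{i\theta}\big(\Delta v^\theta-f(v^\theta)\big)$ and pair it with $e^{i\theta}\overline{v^\theta}$, taking real parts. Because $\operatorname{Re}(e^{i\theta})=\cos\theta$, the time term gives $\tfrac12\tfrac{d}{dt}\|v^\theta\|_{L^2}^2$, while the Laplacian and nonlinear terms give $\cos\theta\,\|\nabla v^\theta\|_{L^2}^2$ and $\cos\theta\,\|v^\theta\|_{L^{2d/(d-2)}}^{2d/(d-2)}$ respectively; integrating in time and discarding the nonnegative potential term yields the second displayed inequality.

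For the energy identity I pair \eqref{eq:g-l} with $\overline{v^\theta_t}$ and take real parts. The dissipative term produces $\cos\theta\,\|v^\theta_t\|_{L^2}^2$, and the Laplacian and nonlinear terms reassemble into $\tfrac{d}{dt}E(v^\theta)$, so that
\begin{align*}
\frac{d}{dt}E\big(v^\theta\big)+\cos\theta\,\big\|v^\theta_t\big\|_{L^2}^2=0 .
\end{align*}
The step specific to the defocusing case is the pointwise-in-time bound $\|v^\theta_t\|_{L^2}^2=\|\Delta v^\theta-f(v^\theta)\|_{L^2}^2\ge\|\Delta v^\theta\|_{L^2}^2$: expanding the square, the cross term equals $2\operatorname{Re}\int\nabla v^\theta\cdot\nabla\overline{f(v^\theta)}\,\mathrm{d}x$, and a direct computation with $f(v)=|v|^{4/(d-2)}v$ gives
\begin{align*}
2\operatorname{Re}\int\nabla v^\theta\cdot\nabla\overline{f(v^\theta)}\,\mathrm{d}x
=2\int\big|v^\theta\big|^{\frac4{d-2}}\big|\nabla v^\theta\big|^2\,\mathrm{d}x
+\frac{8}{d-2}\int\big|v^\theta\big|^{\frac{8-2d}{d-2}}\big|\operatorname{Re}(\overline{v^\theta}\nabla v^\theta)\big|^2\,\mathrm{d}x\ge0 .
\end{align*}
Integrating in time therefore gives $E(v^\theta(t))+\cos\theta\int_0^t\!\int|\Delta v^\theta|^2\,\mathrm{d}x\,\mathrm{d}\tau\le E(v^\theta_0)$ for every $t$, which is the first two terms of the claimed estimate.

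It remains to control the $S$-norm and to read off the regularity. From the last inequality, $\cos\theta\int_0^\infty\|\Delta v^\theta\|_{L^2}^2\,\mathrm{d}t\le E(v_0^\theta)$ and $\|v^\theta\|_{L^\infty_t\dot H^1_x}^2\lesssim E(v_0^\theta)$. Calderón–Zygmund together with the embedding $\dot H^2_x\hookrightarrow\dot W^{1,2d/(d-2)}_x$ then gives $\|\nabla v^\theta\|_{L^2_tL^{2d/(d-2)}_x}\lesssim\|\Delta v^\theta\|_{L^2_tL^2_x}<\infty$, i.e. $v^\theta\in L^2_t\dot W^{1,2d/(d-2)}_x$, while membership in $C^0_t\dot H^1_x$ is part of the well-posedness theory (Theorem \ref{th2.3v19}). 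For the $S$-bound I interpolate: since $\dot H^{2d/(d+2)}_x\hookrightarrow L^{2(d+2)/(d-2)}_x$ and $2d/(d+2)=1+\tfrac{d-2}{d+2}$, one has $\|v^\theta\|_{L^{2(d+2)/(d-2)}_x}\lesssim\|v^\theta\|_{\dot H^1_x}^{4/(d+2)}\|v^\theta\|_{\dot H^2_x}^{(d-2)/(d+2)}$; raising to the power $\tfrac{2(d+2)}{d-2}$ and integrating in $t$,
\begin{align*}
\big\|v^\theta\big\|_{S(\mathbb{R}_+)}^{\frac{2(d+2)}{d-2}}
\lesssim \big\|v^\theta\big\|_{L^\infty_t\dot H^1_x}^{\frac{8}{d-2}}\int_0^\infty\big\|\Delta v^\theta\big\|_{L^2_x}^2\,\mathrm{d}t
\lesssim E\big(v_0^\theta\big)^{\frac{4}{d-2}}\cdot\frac{E\big(v_0^\theta\big)}{\cos\theta},
\end{align*}
so that $\cos\theta\,\|v^\theta\|_{S(\mathbb{R}_+)}^{2(d+2)/(d-2)}\lesssim E(v_0^\theta)$, the implicit constant absorbing the harmless data-dependent factor $E(v_0^\theta)^{4/(d-2)}$.

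The main obstacle is rigor rather than the formal algebra: the identities above require $v^\theta(t)\in\dot H^2$ and $v^\theta_t\in L^2$, which is more than the assumed $\dot H^1$ regularity of the data. I would justify them using the parabolic smoothing of \eqref{eq:g-l}: because $\cos\theta>0$ the equation is genuinely dissipative, so $v^\theta(t)$ is smooth for $t>0$ and the identities hold on every slab $[\varepsilon,t]$; letting $\varepsilon\to0$ and invoking continuity of $t\mapsto E(v^\theta(t))$ (from the well-posedness theory) recovers them on $[0,t]$, while finiteness of $\|v^\theta\|_{S(\mathbb{R}_+)}$ from Theorem \ref{th2.3v19} guarantees $f(v^\theta)\in L^2_tL^2_x$ and hence $\Delta v^\theta=e^{-i\theta}v^\theta_t+f(v^\theta)\in L^2_tL^2_x$, closing the loop. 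Alternatively one may carry out all the estimates for a smooth approximating sequence $v_0^{\theta,n}\to v_0^\theta$ in $\dot H^1$ and pass to the limit, using that the bounds obtained are uniform in $n$.
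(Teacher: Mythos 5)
Your argument follows the same energy-method skeleton as the paper: the $L^2$ identity, the gradient-flow identity $\frac{d}{dt}E(v^\theta)+\cos\theta\,\|v^\theta_t\|_{L^2}^2=0$, and the sign computation for $\operatorname{Re}\int \Delta v^\theta\,\overline{f(v^\theta)}\,\mathrm{d}x$ are all correct, and they are exactly what the paper assembles from its three multiplier identities \eqref{energy:g_11}--\eqref{energy:g_13} (your gradient-flow formulation is in fact cleaner). However, there is one genuine defect: at the key step you discard precisely the term that produces the $S$-norm bound. Expanding $\|v^\theta_t\|_{L^2}^2=\|\Delta v^\theta-f(v^\theta)\|_{L^2}^2$ gives three nonnegative contributions: $\|\Delta v^\theta\|_{L^2}^2$, the cross term (nonnegative by your own computation), and $\|f(v^\theta)\|_{L^2}^2=\|v^\theta\|_{L^{2(d+2)/(d-2)}_x}^{2(d+2)/(d-2)}$, whose integral in time is exactly $\|v^\theta\|_{S([0,t])}^{2(d+2)/(d-2)}$. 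Keeping all three yields the stated estimate with constant $1$, which is what the paper does. By instead keeping only $\|\Delta v^\theta\|_{L^2}^2$ and regenerating the $S$-norm through the interpolation $\|v^\theta\|_{L^{2(d+2)/(d-2)}_x}\lesssim\|v^\theta\|_{\dot H^1}^{4/(d+2)}\|v^\theta\|_{\dot H^2}^{(d-2)/(d+2)}$, you arrive at $\cos\theta\,\|v^\theta\|_{S(\mathbb{R}_+)}^{2(d+2)/(d-2)}\lesssim E(v_0^\theta)^{(d+2)/(d-2)}$. This is not the claimed inequality: in the paper's convention $\lesssim$ carries a universal constant, and a bound by $E(v_0^\theta)^{(d+2)/(d-2)}$ is strictly weaker than a bound by $E(v_0^\theta)$ whenever the energy is large, so the ``harmless data-dependent factor'' cannot be absorbed into the implicit constant. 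The fix is immediate and already contained in your expansion: retain $\|f(v^\theta)\|_{L^2}^2$, after which the entire interpolation paragraph becomes unnecessary.

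Two minor remarks. First, the multiplier for the $L^2$ identity should simply be $\overline{v^\theta}$ applied to $v^\theta_t=e^{i\theta}\bigl(\Delta v^\theta-f(v^\theta)\bigr)$; the identity you state is correct, but the description ``pair with $e^{i\theta}\overline{v^\theta}$'' does not produce it as written. Second, your closing discussion of parabolic smoothing (or approximation by regular data) to justify the formal multiplier computations addresses a point of rigor that the paper passes over silently; that part is a welcome addition rather than a problem.
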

\begin{proof}
Multiplying \eqref{eq:g-l} by $\overline{v^\theta}$, then integrating over $\mathbb{R}^d$ and integrating by parts, we  arrive at
\begin{align}\label{eq3.4v13}
  \f{d}{dt} \left\|v^\theta \right\|_{L^2_x}^2 + 2\cos\theta \left\|\nabla v^\theta \right\|_{L^2_x}^2
  + \int  \left|v^\theta \right|^{\f{2d}{d-2}} \,\mathrm{d}x =0.
\end{align}
Integrating \eqref{eq3.4v13} from $0$ to $t$, we obtain
\begin{align*}
  \left\|v^\theta \right\|_{L^\infty_tL^2_x}^2 + \cos \theta \left\|v^\theta \right\|_{L^2_t\dot{H}^1_x}^2 + \left\|v^\theta \right\|_{L^2_t L^{\f{2d}{d-2}}_x}^{\f{2d}{d-2}} \lesssim \left\|v^\theta_0 \right\|_{L^2_x}^2.
\end{align*}

 We next turn to the energy formulation. Multiplying \eqref{eq:g-l} by $\overline{v^\theta_t}$ and integrating spatially, the real part of resulted equation is given by
 \begin{align}\label{energy:g_11}
 \begin{aligned}
   &\f{d}{dt} \left( \f12 \left\|\nabla v^\theta \right\|_{L^2_x}^2 + \f{d-2}{2d} \left\|v^\theta \right\|_{L^{\f{2d}{d-2}}_x}^{\f{2d}{d-2}} \right)\\
    &+ \cos\theta\left( \cos \theta\int \text{Re}
   \left(\Delta v^\theta \overline{v^\theta_t} \right) \,\mathrm{d}x
   -\sin\theta \int \text{Im}\left(\Delta v^\theta \overline{v^\theta_t} \right) \,\mathrm{d}x \right)\\
   & - \cos\theta\left(\cos\theta \int \text{Re}\left( f \left(v^\theta \right)
    \overline{v^\theta_t} \right) \,\mathrm{d}x
   - \sin\theta \int \text{Im}
   \left(    f \left(v^\theta \right)
   \overline{v^\theta_t}
   \right) \,\mathrm{d}x \right)= 0.
 \end{aligned}
 \end{align}
Multiplying the complex conjugate of \eqref{eq:g-l} by $\Delta v^\theta$, and then integrating over $\mathbb{R}^d$ and taking the real part of the resulted equation, we find
  \begin{align}\label{energy:g_12}
  \begin{aligned}
    -\cos\theta \int \text{Re}  \left(\Delta v^\theta \overline{v^\theta_t }
    \right) \,\mathrm{d}x  + \sin\theta \int \text{Im}
    \left(\Delta v^\theta \overline{v^\theta_t }\right) \,\mathrm{d}x +
    \left\|\Delta v^\theta \right\|_{L^2_x}^2 \\
    -  \int \text{Re} \left(
    \overline{ f \left(v^\theta \right) }
      \Delta v^\theta    \right) \,\mathrm{d}x =0.
  \end{aligned}
  \end{align}
 Similarly, multiplying the complex conjugate of \eqref{eq:g-l} by $ f  \left(v^\theta \right)   $,
  then integrating the resulted equation over  $\mathbb{R}^d$ and taking the real part, we get
\begin{align}
\label{energy:g_13}
\begin{aligned}
  \cos\theta \int \text{Re} \left( f \left(v^\theta \right)
     \overline{v^\theta_t } \right) \,\mathrm{d}x
    - \sin\theta \int \text{Im} \left( f \left(v^\theta \right)
    \overline{v^\theta_t } \right) \,\mathrm{d}x \\
    - \int \text{Re}\left( f \left(v^\theta \right)
    \overline{\Delta v^\theta} \right) \,\mathrm{d}x
    +  \left\|v^\theta \right\|_{L^{\f{2(d+2)}{d-2}}_x}^{\f{2(d+2)}{d-2}} = 0.
\end{aligned}
  \end{align}

Now we may substitute \eqref{energy:g_12} and \eqref{energy:g_13} into \eqref{energy:g_11} to deduce 
 \begin{align*}
   \begin{aligned}
     \f{d}{dt} E \left(v^\theta \right) + \cos\theta \left\|\Delta v^\theta \right\|_{L^2_x}^2 + \cos\theta \left\|v^\theta \right\|_{L^{\f{2(d+2)}{d-2}}_x}^{\f{2(d+2)}{d-2}}
     -  2\cos\theta \int \text{Re} \left( \overline{ f \left( v^\theta \right) }
      \Delta v^\theta   \right) \,\mathrm{d}x
     = 0.
   \end{aligned}
 \end{align*}
 Furthermore, there holds
 \begin{align*}
   \begin{aligned}
     \int \overline{ f \left( v^\theta \right) }
      \Delta v^\theta \,\mathrm{d}x
     &= - \f d{d-2}\int \left|v^\theta \right|^{\f4{d-2}} \left|\nabla v^\theta \right|^2 \,\mathrm{d}x \\
     &\quad- \f2{d-2} \int \left|v^\theta \right|^{\f4{d-2}-2} \left(\nabla v^\theta \cdot \nabla v^\theta \right)
     \overline{v^\theta}^2 \,\mathrm{d}x ,
   \end{aligned}
 \end{align*}
 which implies
 \begin{align*}
   \int \text{Re}
   \left( \overline{f \left( v^\theta \right) }
    \Delta v^\theta   \right) \,\mathrm{d}x
   \le -\int \left|v^\theta \right|^{\f4{d-2}} \left|\nabla v^\theta \right|^2 \,\mathrm{d}x
   \le 0.
 \end{align*}
 Hence
 \begin{align*}
   \begin{aligned}
     \f{d}{dt} E \left(v^\theta \right) + \cos\theta
     \left\|\Delta v^\theta \right\|_{L^2_x}^2 + \cos\theta \left\|v^\theta \right\|_{L^{\f{2(d+2)}{d-2}}_x}^{\f{2(d+2)}{d-2}}  \le 0,
   \end{aligned}
 \end{align*}
  whence for $t \in \mathbb{R}_+$,
  \begin{align*}
   E\left(v^\theta(t) \right) + \cos\theta \int_0^t \left\|\Delta v^\theta(\tau) \right\|_{L^2_x}^2 \,\mathrm{d}\tau
   + \cos\theta \left\|v^\theta \right\|_{S([0, t])}^\frac{2(d+2)}{d - 2}
   \le E \left(v^\theta_0 \right).
 \end{align*}
\end{proof}

\begin{remark}
The result in Proposition \ref{prop:df_gl} partly has been shown in \cite{HW}. We include this and provide a proof  to make the proposition as self-contained as possible.

\end{remark}

We now establish the global energy-dissipation estimate of \eqref{eq:g-l} in the focusing case.

\begin{proposition}[Energy-dissipation estimates for
\eqref{eq:g-l}
in the focusing case]\label{pr2.6v22}
  Let $d=3, 4$. Suppose that $ v^\theta_0 \in \dot{H}^1$, $E \left(v_0^\theta \right) < E(W)$, and $ \left\|v_0^\theta \right\|_{\dot{H}^1} < \|W\|_{\dot{H}^1}$. Then for a solution $v^\theta$ to \eqref{eq:g-l}, we have $v^\theta \in C_t^0 \dot{H}^1_x \cap L^2_t \dot{W}_x^{1,\f{2d}{d-2}}$ and
  \begin{align*}
    E \left(v^\theta(t) \right) + \cos \theta\int \left|\Delta v^\theta \right|^2 \,\mathrm{d}x \mathrm{d}t
    + \cos \theta \left\| v^\theta \right\|_{S(\mathbb{R}_+)}^\frac{2(d+2)}{d - 2}
    \le C=C\left( \left\|v^\theta_0 \right\|_{\dot{H}^1} \right).
  \end{align*}
  If $v^\theta_0 \in L^2$, then we also have
  \begin{align*}
    \left\|v^\theta(t) \right\|_{L^2_x}
    \le  C=C \left( \left\|v^\theta_0 \right\|_{\dot{H}^1} \right).
  \end{align*}
\end{proposition}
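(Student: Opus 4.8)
The plan is to adapt the computation already carried out in Proposition \ref{prop:df_gl} for the defocusing case, since the algebraic structure of the energy identity is identical in both cases. The only place where $\mu = 1$ versus $\mu = -1$ matters is in the control of the energy functional $E(v^\theta)$ itself, which is no longer manifestly nonnegative when $\mu = -1$. First I would run through exactly the same sequence of multiplier identities: multiply \eqref{eq:g-l} by $\overline{v^\theta_t}$ and take real parts to get the analogue of \eqref{energy:g_11}; multiply the conjugate equation by $\Delta v^\theta$ for the analogue of \eqref{energy:g_12}; and multiply the conjugate equation by $f(v^\theta)$ for the analogue of \eqref{energy:g_13}. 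Substituting the latter two into the first yields, after the same integration-by-parts identity for $\int \overline{f(v^\theta)}\,\Delta v^\theta\,\mathrm{d}x$, the differential inequality
\begin{align*}
  \f{d}{dt} E\left(v^\theta\right) + \cos\theta \left\|\Delta v^\theta\right\|_{L^2_x}^2 + \cos\theta \left\|v^\theta\right\|_{L^{\f{2(d+2)}{d-2}}_x}^{\f{2(d+2)}{d-2}} \le 0.
\end{align*}
Note that the sign of $\mu$ never entered the derivation of this monotonicity, because the nonlinear cross-term $\int \mathrm{Re}(\overline{f(v^\theta)}\Delta v^\theta)\,\mathrm{d}x$ was bounded above by a nonpositive quantity regardless of $\mu$.

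Integrating this inequality from $0$ to $t$ gives the conservation-type bound
\begin{align*}
  E\left(v^\theta(t)\right) + \cos\theta \int_0^t \left\|\Delta v^\theta(\tau)\right\|_{L^2_x}^2 \,\mathrm{d}\tau + \cos\theta \left\|v^\theta\right\|_{S([0,t])}^{\f{2(d+2)}{d-2}} \le E\left(v^\theta_0\right).
\end{align*}
At this point the crucial difference from the defocusing argument appears: to convert the left-hand energy $E(v^\theta(t))$ into control of $\|\nabla v^\theta(t)\|_{L^2}^2$, I need the energy to be comparable to the kinetic energy. This is precisely where the hypotheses $E(v_0^\theta) < E(W)$ and $\|v_0^\theta\|_{\dot H^1} < \|W\|_{\dot H^1}$ enter. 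By Theorem \ref{th2.3v19}, these hypotheses are preserved along the flow (the solution is global and its $\dot H^1$ norm stays below that of $W$), so Lemma \ref{le2.1v3} applies at every time and gives $E(v^\theta(t)) \sim \|\nabla v^\theta(t)\|_{L^2_x}$. Combining the variational equivalence with the monotonicity bound then yields the stated estimate with $C = C(\|v_0^\theta\|_{\dot H^1})$.

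For the $L^2$ bound under the extra assumption $v_0^\theta \in L^2$, I would return to the mass identity \eqref{eq3.4v13}, which holds verbatim in the focusing case (the nonlinear term $\int |v^\theta|^{2d/(d-2)}\,\mathrm{d}x \ge 0$ still appears with a favorable sign after multiplying by $\overline{v^\theta}$ and taking the real part). Integrating in time gives
\begin{align*}
  \left\|v^\theta(t)\right\|_{L^2_x}^2 + 2\cos\theta \int_0^t \left\|\nabla v^\theta(\tau)\right\|_{L^2_x}^2\,\mathrm{d}\tau + \int_0^t \left\|v^\theta(\tau)\right\|_{L^{\f{2d}{d-2}}_x}^{\f{2d}{d-2}}\,\mathrm{d}\tau = \left\|v_0^\theta\right\|_{L^2_x}^2,
\end{align*}
whence $\|v^\theta(t)\|_{L^2_x} \le \|v_0^\theta\|_{L^2_x}$. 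To express the bound in the form $C(\|v_0^\theta\|_{\dot H^1})$ as stated, I would simply note that the $L^2$ norm is nonincreasing and absorb the initial $L^2$ datum into the constant. The main obstacle, such as it is, lies not in any new computation but in verifying that the variational equivalence of Lemma \ref{le2.1v3} remains valid uniformly in $t$; this requires the a priori global control from Theorem \ref{th2.3v19} to guarantee that the sub-threshold condition $\|\nabla v^\theta(t)\|_{L^2} < \|\nabla W\|_{L^2}$ does not degenerate, and a small amount of care is needed because Lemma \ref{le2.1v3} as stated assumes the strict gap $E(v_0^\theta) \le (1-\delta_0)E(W)$ rather than the open condition $E(v_0^\theta) < E(W)$ — but any solution satisfying the hypotheses does satisfy such a gap for some $\delta_0 > 0$, so this is only a matter of bookkeeping.
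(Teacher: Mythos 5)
Your proposal contains a genuine gap, and it sits at the exact point that distinguishes this proposition from the defocusing one: the sign of $\mu$ \emph{does} enter the energy identity, in precisely the place you claim it does not. Writing the equation as $v^\theta_t = e^{i\theta}\left(\Delta v^\theta - \mu f\left(v^\theta\right)\right)$, the energy identity is
\begin{equation*}
\frac{d}{dt}E\left(v^\theta\right) + \cos\theta\int\left|\Delta v^\theta - \mu f\left(v^\theta\right)\right|^2\,\mathrm{d}x = 0,
\end{equation*}
and expanding the square produces the cross term $-2\mu\cos\theta\int\mathrm{Re}\left(\overline{f\left(v^\theta\right)}\,\Delta v^\theta\right)\mathrm{d}x$. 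The integral $\int\mathrm{Re}\left(\overline{f\left(v^\theta\right)}\,\Delta v^\theta\right)\mathrm{d}x$ is indeed nonpositive regardless of $\mu$, but that is only useful when it enters with the defocusing sign: for $\mu=1$ the cross term is $\ge 0$ and may be dropped, whereas for $\mu=-1$ it is $\le 0$ and sits on the wrong side of the inequality. Hence your claimed differential inequality $\frac{d}{dt}E + \cos\theta\left\|\Delta v^\theta\right\|_{L^2_x}^2 + \cos\theta\left\|v^\theta\right\|_{L^{2(d+2)/(d-2)}_x}^{2(d+2)/(d-2)} \le 0$ is false in the focusing case, and so is the integrated bound with $E\left(v^\theta_0\right)$ on the right. (Sanity check: your derivation is purely algebraic and threshold-independent, so if valid it would apply to the stationary solution $W$, for which $\frac{d}{dt}E = 0$ while $\|\Delta W\|_{L^2}^2 + \|W\|_{L^{2(d+2)/(d-2)}}^{2(d+2)/(d-2)} > 0$.) This is also why the proposition's conclusion reads $C\left(\left\|v^\theta_0\right\|_{\dot H^1}\right)$ rather than $E\left(v^\theta_0\right)$. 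The paper's actual argument absorbs the bad cross term via Young's inequality into $\frac12\left\|\Delta v^\theta\right\|_{L^2_x}^2$ plus a multiple of $\left\|v^\theta\right\|_{L^{2(d+2)/(d-2)}_x}^{2(d+2)/(d-2)}$, arriving at $\frac{d}{dt}E + \frac12\left\|\Delta v^\theta\right\|_{L^2_x}^2 \le 2\left\|v^\theta\right\|_{L^{2(d+2)/(d-2)}_x}^{2(d+2)/(d-2)}$, and then bounds the time integral of the right-hand side by the global space-time estimate \eqref{eq2.3v15} of Theorem \ref{th2.3v19}. In other words, in the focusing case the $S(\mathbb{R}_+)$-norm control is an \emph{input} imported from the GWP theorem, not an output of the energy identity.

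The same sign error recurs in your $L^2$ argument. Taking real parts after multiplying by $\overline{v^\theta}$ gives, for $\mu = -1$, a mass identity in which the nonlinear term is a \emph{source}: schematically $\frac{d}{dt}\left\|v^\theta\right\|_{L^2_x}^2 + 2\cos\theta\left\|\nabla v^\theta\right\|_{L^2_x}^2 = c_\theta\int\left|v^\theta\right|^{\frac{2d}{d-2}}\mathrm{d}x$ with a positive right-hand side, so the monotonicity $\left\|v^\theta(t)\right\|_{L^2} \le \left\|v^\theta_0\right\|_{L^2}$ you assert does not follow; mass is not manifestly nonincreasing in the focusing case. The paper instead controls this source term using Sobolev's inequality together with the variational estimates of Lemma \ref{le2.1v3} (valid under the sub-threshold hypotheses) and the uniform energy bound obtained in the first part of the proof. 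Your closing remark about the $\delta_0$-gap in Lemma \ref{le2.1v3} is a fair bookkeeping point, but it does not repair either of the two sign issues above, which are where the focusing proof genuinely differs from the defocusing one.
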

\begin{proof}
 We first derive the energy formulation. Multiplying \eqref{eq:g-l} by $\overline{v^\theta_t } $ and integrating spatially, we find that the real part of the resulted equation is given by
 \begin{align}\label{energy:g_1f1}
 \begin{aligned}
   &\f{d}{dt} \left(\f12 \left\|\nabla v^\theta \right\|_{L^2_x}^2 - \f{d-2}{2d} \left\|v^\theta\right\|_{L^{\f{2d}{d-2}}_x}^{\f{2d}{d-2}} \right) \\
   &+ \cos \theta\int \text{Re} \left(\Delta v^\theta \overline{v^\theta_t }
   \right) \,\mathrm{d}x
   -\sin\theta \int \text{Im} \left(\Delta v^\theta \overline{v^\theta_t }
   \right)\,\mathrm{d}x \\
   & + \cos\theta \int \text{Re}\left( f \left(v^\theta \right) \overline{v^\theta_t } \right) \,\mathrm{d}x
   -\sin\theta \int \text{Im}\left( f \left( v^\theta \right) \overline{v^\theta_t } \right) \,\mathrm{d}x = 0.
 \end{aligned}
 \end{align}
Multiplying the complex conjugate of \eqref{eq:g-l} by $\Delta u$, and then integrating over $\mathbb{R}^d$ and taking the real part of the resulted equation, we obtain
\begin{align}\label{energy:g_1f2}
\begin{aligned}
  -\cos\theta \int \text{Re}
    \left(\Delta v^\theta \overline{v^\theta_t }
    \right) \,\mathrm{d}x
    + \sin\theta \int \text{Im}
    \left(\Delta v^\theta \overline{v^\theta_t }
    \right) \,\mathrm{d}x \\
    + \left\|\Delta v^\theta \right\|_{L^2_x}^2 +  \int \text{Re} \left( \overline{ f \left( v^\theta \right) }
     \Delta v^\theta \right) \,\mathrm{d}x  =0.
\end{aligned}
  \end{align}
 Similarly, multiplying the complex conjugate of \eqref{eq:g-l} by $ f \left( v^\theta \right) $, integrating the resulted equation on $\mathbb{R}^d$ and taking the real part, we find 
\begin{align}\label{energy:g_1f3}
\begin{aligned}
  \cos\theta \int \text{Re} \left( f \left(v^\theta \right)
    \overline{v^\theta_t}
    \right) \,\mathrm{d}x
    - \sin\theta \int \text{Im} \left( f \left(v^\theta \right)
    \overline{v^\theta_t}
    \right)\,\mathrm{d}x \\
    - \int \text{Re} \left(  f \left( v^\theta \right)
    \overline{ \Delta  v^\theta }
    \right) \,\mathrm{d}x
    =  \left\|v^\theta \right\|_{L^{\f{2(d+2)}{d-2}}_x}^{\f{2(d+2)}{d-2}}.
\end{aligned}
  \end{align}
  
Now we substitute \eqref{energy:g_1f2} and \eqref{energy:g_1f3} into \eqref{energy:g_1f1} to get
\begin{align}\label{energy:g_lh1}
   \begin{aligned}
     \f{d}{dt} E \left(v^\theta \right) + \left\|\Delta v^\theta \right\|_{L^2_x}^2 + \left\|v^\theta \right\|_{L^{\f{2(d+2)}{d-2}}_x}^{\f{2(d+2)}{d-2}}
     +  2\int \text{Re}
     \left(\overline{  f \left( v^\theta \right) }
     \Delta v^\theta   \right) \,\mathrm{d}x
     = 0.
   \end{aligned}
 \end{align}
 Furthermore, the inequality
\begin{align*}
   \begin{aligned}
     \int \left|v^\theta \right|^{\f4{d-2}}\Delta v^\theta \overline{v^\theta } \,\mathrm{d}x
     \le \f14  \left\|\Delta v^\theta \right\|_{L^2_x}^2 + 4 \left\|v^\theta \right\|_{L^{\f{2(d+2)}{d-2}}_x}^{\f{2(d+2)}{d-2}}
   \end{aligned}
 \end{align*}
together with \eqref{energy:g_lh1} implies
\begin{align*}
   \begin{aligned}
     \f{d}{dt} E \left(v^\theta \right) + \f12 \left\|\Delta v^\theta \right\|_{L^2_x}^2 \le 2 \left\|v^\theta\right\|_{L^{\f{2(d+2)}{d-2}}_x}^{\f{2(d+2)}{d-2}}.
   \end{aligned}
 \end{align*}
 By \eqref{eq2.3v15}, we have
 \begin{align*}
   E \left(v^\theta(t) \right) + \left\|\Delta v^\theta \right\|_{L_{t,x}^2}
   +   \left\|v^\theta \right\|_{S(\mathbb{R}_+)}
   \le  C \left( \left\|v^\theta_0 \right\|_{\dot{H}^1} \right).
 \end{align*}
Multiplying \eqref{eq:g-l} by $\overline{v^\theta }$, and then  integrating over $\mathbb{R}^d$ and integrating by parts, we arrive at the estimate
\begin{align*}
  \f{d}{dt} \left\|v^\theta \right\|_{L^2_x}^2 + 2\cos\theta \left\|\nabla v^\theta \right\|_{L^2_x}^2 = \int \left|v^\theta \right|^{\f{2d}{d-2}} \,\mathrm{d}x .
\end{align*}
By the Sobolev inequality and Lemma \ref{le2.1v3}, we have
$\left\|v^\theta(t) \right\|_{L^{\f{2d}{d-2}}_x} \le CE \left(v^\theta(t) \right)$.
Therefore, we get
\begin{align*}
  \left\|v^\theta \right\|_{L_t^\infty L^2_x}
  \le  C\left( \left\|v^\theta_0 \right\|_{\dot{H}^1} \right).
\end{align*}
\end{proof}

Before giving the global energy-dissipation estimate of \eqref{eq:h}, we first recall the global well-posedness of \eqref{eq:h}.
\begin{theorem}[GWP of energy-critical NLH, \cite{GR}]\label{th2.4v19}
Fix $d= 3,4$ and $u_0\in \dot{H}^1$. Assume one of the following two conditions is satisfied:
\begin{enumerate}
    \item $\mu = 1$,
    \item $\mu = - 1$, with
\begin{align}
    E(u_0) < E(W), \text{ and }
    \|u_0 \|_{\dot{H}^1} < \|W\|_{\dot{H}^1}.
\end{align}
\end{enumerate}
Then the solution $u$ to \eqref{eq:h}  is global and
\begin{align*}
    \|u\|_{S(\mathbb{R}_+ )  }
    \le C=C \left( \|u_0 \|_{\dot{H}^1} \right).
\end{align*}
\end{theorem}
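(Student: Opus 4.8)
The plan is to follow the concentration--compactness/rigidity scheme of Kenig--Merle, adapted to the dissipative (parabolic) setting, in which the monotonicity of the energy plays the role that Morawetz-type estimates play for NLS. First I would set up the local theory. Writing the solution in Duhamel form
\begin{align*}
u(t) = e^{t\Delta} u_0 - \mu \int_0^t e^{(t-s)\Delta} f(u(s))\,\mathrm{d}s,
\end{align*}
I would exploit the parabolic smoothing estimates $\|e^{t\Delta} g\|_{L^q_x} \lesssim t^{-\frac d2(\frac1r-\frac1q)}\|g\|_{L^r_x}$, which furnish a family of admissible space-time bounds analogous to the Strichartz estimates of Lemma \ref{lem:strichartz}. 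A contraction argument in a space controlling the $S$-norm together with $\dot H^1_x$ then yields local well-posedness, a small-data global theory (if the $S$-norm of the free evolution $e^{t\Delta}u_0$ is small, then $u$ is global with $\|u\|_{S(\mathbb{R}_+)}\lesssim \|u_0\|_{\dot H^1}$), and a perturbation/stability lemma. This step is routine once the smoothing estimates are in place.

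Second, I would derive the uniform-in-time $\dot H^1$ bound from energy dissipation. Since \eqref{eq:h} is the $L^2$ gradient flow of $E$, multiplying by $\overline{u_t}$ and taking the real part gives the Lyapunov identity
\begin{align*}
\frac{\mathrm{d}}{\mathrm{d}t} E(u(t)) = -\|u_t(t)\|_{L^2_x}^2 \le 0,
\end{align*}
valid in both the focusing and defocusing cases, so $E(u(t))\le E(u_0)$ for all $t$. In the defocusing case $E$ is coercive and the bound $\|\nabla u(t)\|_{L^2_x}^2 \le 2E(u_0)$ is immediate. In the focusing case I would invoke the variational characterization of $W$: under the hypotheses $E(u_0)<E(W)$ and $\|u_0\|_{\dot H^1}<\|W\|_{\dot H^1}$ the sub-threshold region is invariant (the sign of $\|\nabla u\|_{L^2_x}-\|\nabla W\|_{L^2_x}$ cannot change by continuity, since the usual energy conservation is here replaced by the even more favorable energy decay), and on this region $E(u)\sim \|\nabla u\|_{L^2_x}^2$, again giving a uniform $\dot H^1$ bound, exactly as in the analogous Lemma \ref{le2.1v3} for CGL.

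Third---and this is where the main obstacle lies---I would establish the global space-time bound $\|u\|_{S(\mathbb{R}_+)}<\infty$. Following the rigidity scheme, let $E_c$ be the supremum of energy levels below which the $S$-norm stays finite and argue by contradiction, assuming $E_c$ lies strictly below the ground-state threshold. Using a linear profile decomposition for the heat propagator together with the stability lemma, one extracts a minimal-energy critical element whose trajectory is precompact in $\dot H^1_x$ modulo the scaling and translation symmetries (the heat flow has no Galilean invariance, which lightens the symmetry bookkeeping but forbids any time-reversal argument). The rigidity step then feeds on the dissipation identity: integrating the Lyapunov relation gives $\int_0^\infty \|u_t\|_{L^2_x}^2\,\mathrm{d}t = E(u_0)-\lim_{t\to\infty}E(u(t))<\infty$, so along the compact trajectory $u_t\to 0$ and the limit is a finite-energy stationary solution of $-\Delta u+\mu f(u)=0$; below the ground-state threshold the only such solution is $u\equiv 0$, contradicting the positivity of the critical energy. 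The delicate points are the construction of the profile decomposition adapted to the non-unitary heat semigroup (so that orthogonality is measured in the dissipative $S$-norm rather than in a conserved $L^2$ inner product) and the precompactness of the critical element; the dissipative monotonicity is precisely what allows the concluding Liouville-type argument to go through without the interaction Morawetz estimates needed in the NLS setting.
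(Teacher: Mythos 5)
The paper does not actually prove this theorem---it is imported wholesale from Gustafson--Roxanas \cite{GR} (the defocusing case being classical parabolic theory)---and your concentration-compactness/rigidity outline, with the Lyapunov identity $\frac{\mathrm{d}}{\mathrm{d}t}E(u(t))=-\|u_t(t)\|_{L^2_x}^2$ standing in for Morawetz-type estimates and the rigidity step concluding from $\int_0^\infty\|u_t\|_{L^2_x}^2\,\mathrm{d}t<\infty$ that the critical element is a sub-threshold stationary solution and hence zero, is precisely the strategy of that reference. So your proposal follows essentially the same route as the paper's source; the steps you defer (the profile decomposition adapted to the heat semigroup, control of the scaling/translation parameters in the rigidity argument, and ruling out finite-time blow-up of the critical element) constitute exactly the technical content of \cite{GR}.
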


With the aid of Theorem \ref{th2.4v19}, we obtain the global energy-dissipation estimate of \eqref{eq:h}.
\begin{proposition}[Energy-dissipation estimates \eqref{eq:h} in the defocusing case]\label{prop:df_heat}
  Let $d=3, 4$. Suppose that $ u_0 \in \dot{H}^1$. Then for a solution $u$ to \eqref{eq:h}, we have $u \in C^0_t\dot{H}^1_x \cap L^2_t \dot{W}_x^{1,\f{2d}{d-2}}$ and 
  \begin{align*}
    E(u(t)) + \int |\Delta u|^2 \,\mathrm{d}x \mathrm{d}t
    +  \| u \|_{S(\mathbb{R}_+ )}^\frac{2(d+2)}{d - 2 }
    \lesssim E(u_0).
  \end{align*}
  Furthermore, if $u_0 \in L^2$, then we also have
  \begin{align*}
    \|u(t)\|_{L^2_x}^2 + \int_0^t \|\nabla u(\tau) \|_{L^2}^2 \,\mathrm{d}\tau  \lesssim \|u_0\|_{L^2}^2.
  \end{align*}
\end{proposition}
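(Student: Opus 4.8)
The plan is to follow the proof of Proposition~\ref{prop:df_gl} specialized to $\theta=0$, $\mu=1$, since \eqref{eq:h} is exactly \eqref{eq:g-l} with these choices; the absence of the $\sin\theta$ terms and the factor $\cos\theta=1$ make every step strictly simpler. Existence of the global solution $u$ is furnished by Theorem~\ref{th2.4v19}, and everything else is an a priori energy--dissipation computation. Two pairings are required: multiplying \eqref{eq:h} by $\overline u$ for the $L^2$-bound, and by $\overline{u_t}$, supplemented by an expansion of $\|u_t\|_{L^2_x}^2$, for the energy bound.

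For the $L^2$-estimate I would multiply \eqref{eq:h} by $\overline u$, integrate over $\mathbb{R}^d$, integrate the Laplacian by parts, and take the real part. Using $\mathrm{Re}\int u_t\overline u\,\mathrm{d}x=\tfrac12\tfrac{d}{dt}\|u\|_{L^2_x}^2$ and $\int f(u)\overline u\,\mathrm{d}x=\int|u|^{\frac{2d}{d-2}}\,\mathrm{d}x$, this gives, for $\mu=1$,
\[
\tfrac12\tfrac{d}{dt}\|u\|_{L^2_x}^2+\|\nabla u\|_{L^2_x}^2+\int|u|^{\frac{2d}{d-2}}\,\mathrm{d}x=0 ,
\]
with all three terms nonnegative; integrating in time from $0$ to $t$ and discarding the nonnegative $L^{\frac{2d}{d-2}}$ contribution yields the second displayed inequality.

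For the energy bound I would first pair \eqref{eq:h} with $\overline{u_t}$ and take the real part, using $\mathrm{Re}\int f(u)\overline{u_t}\,\mathrm{d}x=\tfrac{d-2}{2d}\tfrac{d}{dt}\|u\|_{L^{\frac{2d}{d-2}}_x}^{\frac{2d}{d-2}}$, to obtain $\tfrac{d}{dt}E(u)+\|u_t\|_{L^2_x}^2=0$. Then, substituting $u_t=\Delta u-f(u)$ and expanding the square,
\[
\|u_t\|_{L^2_x}^2=\|\Delta u\|_{L^2_x}^2+\|u\|_{L^{\frac{2(d+2)}{d-2}}_x}^{\frac{2(d+2)}{d-2}}-2\,\mathrm{Re}\int\overline{f(u)}\,\Delta u\,\mathrm{d}x ,
\]
where I used $\|f(u)\|_{L^2_x}^2=\|u\|_{L^{\frac{2(d+2)}{d-2}}_x}^{\frac{2(d+2)}{d-2}}$. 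The integration-by-parts identity already recorded in the proof of Proposition~\ref{prop:df_gl} gives $\mathrm{Re}\int\overline{f(u)}\Delta u\,\mathrm{d}x\le-\int|u|^{\frac4{d-2}}|\nabla u|^2\,\mathrm{d}x\le0$, so the cross term is nonnegative and
\[
\tfrac{d}{dt}E(u)+\|\Delta u\|_{L^2_x}^2+\|u\|_{L^{\frac{2(d+2)}{d-2}}_x}^{\frac{2(d+2)}{d-2}}\le0 .
\]
Integrating on $[0,t]$ and using $E(u(t))\ge0$ in the defocusing case bounds $\int_0^t\|\Delta u\|_{L^2_x}^2\,\mathrm{d}\tau$ and $\|u\|_{S([0,t])}^{\frac{2(d+2)}{d-2}}$ by $E(u_0)$ uniformly in $t$, whence the first displayed inequality after letting $t\to\infty$; note that here the defocusing energy alone controls the $S$-norm, so Theorem~\ref{th2.4v19} is needed only for existence.

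I expect the main obstacle to be rigor rather than algebra: for data merely in $\dot H^1$ the pairings with $u_t$ and $\Delta u$ and the subsequent integrations by parts are only formal. I would justify them via parabolic smoothing---$u$ is instantly smooth for $t>0$, so the identities hold on $[\tau,t]$ for every $\tau>0$ and one passes to the limit $\tau\to0^+$ using continuity of $E(u(\cdot))$ at $0$---or equivalently by running the computation on a smooth approximating sequence and passing to the limit. The only genuinely delicate piece is the sign of $\mathrm{Re}\int\overline{f(u)}\Delta u$, which, exactly as in Proposition~\ref{prop:df_gl}, comes from differentiating the complex nonlinearity $|u|^{\frac4{d-2}}u$ and controlling the remaining term by the Cauchy--Schwarz inequality.
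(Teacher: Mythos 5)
Your proposal is correct and is essentially the paper's own proof: the paper simply invokes the argument of Proposition~\ref{prop:df_gl} (specialized to $\theta=0$, $\mu=1$, with Theorem~\ref{th2.4v19} supplying global existence), which is exactly what you carry out, down to the same sign estimate $\mathrm{Re}\int\overline{f(u)}\,\Delta u\,\mathrm{d}x\le -\int|u|^{\f4{d-2}}|\nabla u|^2\,\mathrm{d}x\le 0$. Your reformulation via $\f{d}{dt}E(u)+\|u_t\|_{L^2_x}^2=0$ and expansion of $\|u_t\|_{L^2_x}^2$ is algebraically identical to the paper's three pairings (with $\overline{v^\theta_t}$, $\Delta v^\theta$, and $f(v^\theta)$), and your remark on justifying the formal computations by smoothing/approximation is a point the paper leaves implicit.
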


\begin{proposition}[Energy-dissipation estimates for  \eqref{eq:h} in the focusing case]\label{prop:f_heat}
  Let $d=3, 4$. Suppose that $u_0 \in \dot{H}^1$ satisfies
  \begin{align*}
 E(u_0) < E(W)\quad \text{and}\quad \|u_0 \|_{\dot{H}^1} < \|W\|_{\dot{H}^1}.
  \end{align*}
  Then for a solution $u$ to \eqref{eq:h}, we have
  \begin{align*}
    E(u(t)) + \int|\Delta u |^2 \,\mathrm{d}x \mathrm{d}t
    +  \|u \|_{S( \mathbb{R}_+ )}^\frac{2(d+2)}{d - 2}
    \le C=C \left(\|u_0\|_{\dot{H}^1} \right).
  \end{align*}
  If $u_0 \in L^2$, then we also have
  \begin{align*}
    \|u(t)\|_{L^2_x}
    \le C=C \left(\|u_0\|_{\dot{H}^1} \right).
  \end{align*}
\end{proposition}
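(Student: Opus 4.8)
The plan is to follow the scheme of Proposition \ref{pr2.6v22}, specialized to the case $\cos\theta = 1$: equation \eqref{eq:h} is precisely the $\theta \to 0$ form of \eqref{eq:g-l}, so the same energy--dissipation identities apply, only now without the oscillatory factor $e^{-i\theta}$, which simplifies the bookkeeping. First I would pair \eqref{eq:h} with $\overline{u_t}$, integrate over $\mathbb{R}^d$, and take the real part. Using $-\int \text{Re}(\Delta u\,\overline{u_t})\,\mathrm{d}x = \tfrac12\tfrac{d}{dt}\|\nabla u\|_{L^2_x}^2$ and $\mu\int\text{Re}(f(u)\overline{u_t})\,\mathrm{d}x = \mu\tfrac{d-2}{2d}\tfrac{d}{dt}\|u\|_{L^{2d/(d-2)}_x}^{2d/(d-2)}$, this collapses to the clean dissipation law
\[
\frac{d}{dt}E(u) = -\|u_t\|_{L^2_x}^2 \le 0,
\]
which already yields the monotonicity $E(u(t)) \le E(u_0)$.

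To extract the $\|\Delta u\|_{L^2}$ and $S$-norm terms, I would substitute $u_t = \Delta u - \mu f(u)$ (with $\mu=-1$) into $\|u_t\|_{L^2_x}^2$ and expand. Since $\|f(u)\|_{L^2_x}^2 = \|u\|_{L^{2(d+2)/(d-2)}_x}^{2(d+2)/(d-2)}$, this gives the identity
\[
\frac{d}{dt}E(u) + \|\Delta u\|_{L^2_x}^2 + 2\int\text{Re}\big(\overline{f(u)}\,\Delta u\big)\,\mathrm{d}x + \|u\|_{L^{2(d+2)/(d-2)}_x}^{2(d+2)/(d-2)} = 0,
\]
the exact analog of \eqref{energy:g_lh1}. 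The cross term is controlled by Cauchy--Schwarz and Young: $\big|\int\overline{f(u)}\Delta u\,\mathrm{d}x\big| \le \|u\|_{L^{2(d+2)/(d-2)}_x}^{(d+2)/(d-2)}\|\Delta u\|_{L^2_x} \le \tfrac14\|\Delta u\|_{L^2_x}^2 + C\|u\|_{L^{2(d+2)/(d-2)}_x}^{2(d+2)/(d-2)}$, which absorbs half of the Laplacian term and leaves $\tfrac{d}{dt}E(u) + \tfrac12\|\Delta u\|_{L^2_x}^2 \le C\,\|u\|_{L^{2(d+2)/(d-2)}_x}^{2(d+2)/(d-2)}$. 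Integrating in time from $0$ to $t$, the right side becomes $C\|u\|_{S(\mathbb{R}_+)}^{2(d+2)/(d-2)}$, which is finite by the global bound of Theorem \ref{th2.4v19}. Since the subthreshold hypotheses force (via the variational estimate, cf. Lemma \ref{le2.1v3}) $E(u(t)) \sim \|\nabla u(t)\|_{L^2_x}^2 \ge 0$ and $E(u_0) \le \tfrac12\|u_0\|_{\dot{H}^1}^2 = C(\|u_0\|_{\dot{H}^1})$, combining the energy monotonicity, the integrated Laplacian bound, and the $S$-norm bound delivers the first displayed estimate.

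For the $L^2$ bound I would pair \eqref{eq:h} with $\overline{u}$ and take the real part, giving $\tfrac{d}{dt}\|u\|_{L^2_x}^2 + 2\|\nabla u\|_{L^2_x}^2 = \|u\|_{L^{2d/(d-2)}_x}^{2d/(d-2)}$ in the focusing case. Here the decisive point is the subthreshold coercivity: because $E(u(t)) \le E(u_0) < E(W)$ and $\|\nabla u(t)\|_{L^2} < \|\nabla W\|_{L^2}$ are preserved by the flow, the sharp Sobolev inequality (with optimizer $W$) yields a gap $\|u\|_{L^{2d/(d-2)}_x}^{2d/(d-2)} \le (1-c_0)\|\nabla u\|_{L^2_x}^2$ with $c_0>0$. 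Substituting gives $\tfrac{d}{dt}\|u\|_{L^2_x}^2 \le -(1+c_0)\|\nabla u\|_{L^2_x}^2 \le 0$, so $\|u(t)\|_{L^2_x} \le \|u_0\|_{L^2_x}$, the desired uniform-in-time $L^2$ control.

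The main obstacle is the focusing sign. Unlike the defocusing Proposition \ref{prop:df_heat}, where the potential term carries a favorable sign and may simply be discarded, here the nonlinearity fights the dissipation, so the argument hinges on two non-automatic inputs: (i) the subthreshold variational estimate, which keeps $E(u(t))$ positive and comparable to $\|\nabla u(t)\|_{L^2}^2$ and supplies the coercivity gap $c_0$, and which requires verifying that the energy/kinetic-energy trapping persists along the heat flow (a standard continuity argument using $E(u(t))\le E(u_0)$); and (ii) the global space-time bound of Theorem \ref{th2.4v19}, which is exactly what makes the time integral of the nonlinear contribution finite. Once these are in hand, the remaining steps are the routine Cauchy--Schwarz/Young manipulation of the cross term.
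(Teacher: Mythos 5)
Your proof is correct and follows essentially the same energy-method route as the paper, whose own proof of this proposition simply invokes the argument of Proposition \ref{pr2.6v22} with Theorem \ref{th2.4v19} in place of Theorem \ref{th2.3v19}. Your derivation of the key identity via the gradient-flow structure $\frac{d}{dt}E(u) = -\|u_t\|_{L^2_x}^2$ followed by expanding $\|\Delta u + f(u)\|_{L^2_x}^2$ is just the $\theta = 0$ collapse of the paper's three multiplier identities \eqref{energy:g_1f1}--\eqref{energy:g_1f3}, and it yields exactly the analog of \eqref{energy:g_lh1}; from there your Cauchy--Schwarz/Young absorption of the cross term and the integration in time against the $S(\mathbb{R}_+)$ bound of Theorem \ref{th2.4v19} coincide with the paper's steps. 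The one genuine difference is the $L^2$ estimate: the paper controls the nonlinear term in the identity $\frac{d}{dt}\|u\|_{L^2_x}^2 + 2\|\nabla u\|_{L^2_x}^2 = c\,\|u\|_{L^{2d/(d-2)}_x}^{2d/(d-2)}$ by a constant, via Sobolev and Lemma \ref{le2.1v3}, and then asserts the uniform-in-time bound --- which, as written, only yields linear-in-time growth of $\|u(t)\|_{L^2_x}^2$; your use of the sharp subthreshold coercivity gap $\|u\|_{L^{2d/(d-2)}_x}^{2d/(d-2)} \le (1-c_0)\|\nabla u\|_{L^2_x}^2$ makes $\|u(t)\|_{L^2_x}$ monotone nonincreasing, which closes this step rigorously. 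Note that your resulting bound $\|u(t)\|_{L^2_x}\le\|u_0\|_{L^2_x}$ depends on $\|u_0\|_{L^2}$ rather than only on $\|u_0\|_{\dot{H}^1}$, which is in fact the natural form of the conclusion (a bound at $t=0$ in terms of $\|u_0\|_{\dot{H}^1}$ alone cannot hold for general $L^2$ data). You are also right to single out the persistence of the variational trapping along the flow as the place where the focusing sign is absorbed; this is implicit in the paper's appeal to Lemma \ref{le2.1v3} and your continuity argument supplies it.
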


\begin{proof}[Proof of Propositions \ref{prop:df_heat} and \ref{prop:f_heat}]
We can apply the same argument as that used in the proof of Propositions \ref{prop:df_gl} or \ref{pr2.6v22}. The only difference is that we use Theorem \ref{th2.4v19} instead of Theorem \ref{th2.3v19}.
\end{proof}

\section{Proof of the zero-dispersion limit theory}\label{se3}

In this section, we show the zero-dispersion limit of \eqref{eq:g-l} via the energy method, where one of the ingredients is the higher regularity of solutions to limiting equation.

\subsection{Higher regularity for energy-critical nonlinear heat equation}
In this subsection, we show the solution of energy-critical NLH has higher regularity when the initial data lies in $H^2$.
\begin{proposition}[Higher regularity for nonlinear heat equation]\label{prop:higher_heat}
  Suppose that $u$ is a solution to  \eqref{eq:h} and that $u_0 \in H^2$. When $\mu = - 1$, we additionally assume
  \begin{align*}
      E(u_0) <E(W)\quad \text{and}\quad \|u_0 \|_{\dot{H}^1} < \|W\|_{\dot{H}^1}.
  \end{align*}
  Then we have
  \begin{align}\label{eq3.16v15}
    \|u\|_{L^\infty_t H^2_x} + \|u\|_{L_t^2 \dot{H}^3_x}
    \le C=C\left(\|u_0\|_{H^2} \right).
  \end{align}
\end{proposition}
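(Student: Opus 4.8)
The plan is to establish the $H^2$ regularity bound via an energy estimate on $\Delta u$, bootstrapped from the already-known energy-critical bounds. Since $u_0 \in H^2 = L^2 \cap \dot H^2$, and the energy-dissipation estimates (Propositions \ref{prop:df_heat} and \ref{prop:f_heat}) already give us control of $\|u\|_{L^\infty_t\dot H^1_x}$, $\|\Delta u\|_{L^2_{t,x}}$, $\|u\|_{L^\infty_t L^2_x}$ and the $S(\mathbb{R}_+)$ space-time bound, the remaining task is to promote this to $L^\infty_t \dot H^2_x \cap L^2_t \dot H^3_x$. The natural quantity to differentiate is $\tfrac12\tfrac{d}{dt}\|\Delta u\|_{L^2_x}^2$.

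First I would apply $\Delta$ to the equation \eqref{eq:h}, giving $\partial_t(\Delta u) - \Delta(\Delta u) + \mu\,\Delta f(u) = 0$, then pair against $\Delta u$ in $L^2$ and integrate by parts. This produces
\begin{align*}
\frac12\frac{d}{dt}\|\Delta u\|_{L^2_x}^2 + \|\nabla \Delta u\|_{L^2_x}^2
= -\mu\int \nabla\Delta u \cdot \overline{\nabla f(u)}\,\mathrm{d}x,
\end{align*}
after moving one derivative onto the nonlinear term. The goal is then to absorb the right-hand side into the good dissipative term $\|\nabla\Delta u\|_{L^2_x}^2 = \|u\|_{\dot H^3_x}^2$ by a Cauchy--Schwarz/Young splitting, leaving a forcing term that is integrable in time. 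Since $f(u) = |u|^{4/(d-2)}u$, we have $|\nabla f(u)| \lesssim |u|^{4/(d-2)}|\nabla u|$, so I would estimate $\|\nabla f(u)\|_{L^2_x} \lesssim \big\||u|^{4/(d-2)}\nabla u\big\|_{L^2_x}$ by Hölder, distributing the powers of $u$ into the scaling-critical Lebesgue space $L^{2d/(d-2)}_x$ (controlled by $\|u\|_{\dot H^1_x}$ via Sobolev) and the remaining factor into a space controlled by $\|\Delta u\|_{L^2_x}$. The upshot should be a differential inequality of Gronwall type,
\begin{align*}
\frac{d}{dt}\|\Delta u\|_{L^2_x}^2 + \|u\|_{\dot H^3_x}^2
\lesssim g(t)\,\|\Delta u\|_{L^2_x}^2,
\end{align*}
where $g(t)$ is built from critical space-time norms of $u$ (for instance $\|u\|_{S}^{\,p}$ times powers of $\|u\|_{L^\infty_t\dot H^1_x}$) and is integrable on $\mathbb{R}_+$ by Theorems \ref{th2.4v19}. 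Integrating in time and invoking the finiteness of $\int_0^\infty g\,\mathrm{d}t$ then yields both the $L^\infty_t\dot H^2_x$ bound (from Gronwall) and the $L^2_t\dot H^3_x$ bound (from integrating the dissipation), with the constant depending only on $\|u_0\|_{H^2}$.

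The main obstacle I anticipate is the Hölder bookkeeping for the nonlinear term in the borderline dimensions $d=3,4$: one must verify that the exponents close so that every factor lands in a space already controlled by the energy-critical theory, and in particular that the power of $\|\Delta u\|_{L^2_x}$ appearing is exactly one (linear), so that Gronwall applies rather than producing a superlinear blowup. When $d=4$ the nonlinearity $|u|^2 u$ is cubic and the derivative distribution is clean; when $d=3$ the exponent $4/(d-2)=4$ is large and the function $f$ is no longer smooth at the origin, so some care (or an approximation/difference-quotient argument in place of a literal second derivative) may be needed to justify the formal computation and to handle the low regularity of $f$. A secondary technical point is justifying the integration by parts and the differentiation under the integral rigorously, which one typically does by first working with a smooth approximation of the data and passing to the limit, or by a standard persistence-of-regularity/local-theory argument combined with the global-in-time critical bound to rule out norm inflation.
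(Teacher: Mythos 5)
Your overall architecture (a single energy identity for $\Delta u$ plus Gronwall, in place of the paper's two-step argument: a Strichartz bootstrap for $\nabla^2 u$ on finitely many small intervals to get $L^\infty_t\dot H^2_x$ first, and only then an energy estimate for the $L^2_t\dot H^3_x$ piece) is viable, but as written the proposal has a genuine gap exactly at the step you yourself flag as the main obstacle, and the specific H\"older splitting you propose does not close. You suggest putting the powers of $u$ into $L^{2d/(d-2)}_x$, controlled by $\|u\|_{\dot H^1_x}$. This fails twice over. First, it is not a valid split: in $d=3$ one would need $\bigl\||u|^4\nabla u\bigr\|_{L^2_x}\le\|u\|_{L^6_x}^4\|\nabla u\|_{L^r_x}$ with $\frac46+\frac1r=\frac12$, which forces $r<0$; in $d=4$ it forces $\|\nabla u\|_{L^\infty_x}$, which is not controlled by $\|\Delta u\|_{L^2_x}$. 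Second, and more fundamentally, any bound of the form $g(t)\|\Delta u\|_{L^2_x}^2$ in which $g$ is controlled only through $\|u\|_{L^\infty_t\dot H^1_x}$ makes $g$ bounded but \emph{not} integrable on $\mathbb{R}_+$, so Gronwall yields a factor $e^{CT}$; the conclusion \eqref{eq3.16v15} requires a constant depending only on $\|u_0\|_{H^2}$, uniformly in time. Alternatively, naive splittings that avoid $L^\infty$ produce superlinear powers of $\|\Delta u\|_{L^2_x}^2$ (a Riccati-type inequality), which is precisely the blowup scenario you worry about but do not rule out.

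The missing idea that rescues your route is to send the $u$-powers into the $S$-integrand and the leftover gradient factor into an interpolation between $\dot H^2$ and $\dot H^3$, absorbing the $\dot H^3$ part into the dissipation. Concretely,
\begin{align*}
\|\nabla f(u)\|_{L^2_x}\lesssim \|u\|_{L^{\frac{2(d+2)}{d-2}}_x}^{\frac4{d-2}}\,\|\nabla u\|_{L^{\frac{2(d+2)}{d-2}}_x},
\qquad
\|\nabla u\|_{L^{\frac{2(d+2)}{d-2}}_x}\lesssim \|\Delta u\|_{L^2_x}^{1-\sigma}\,\|\nabla\Delta u\|_{L^2_x}^{\sigma},
\end{align*}
with $\sigma=\frac15$ for $d=3$ and $\sigma=\frac13$ for $d=4$ (Sobolev plus interpolation). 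Feeding this into your identity and applying Young's inequality to absorb all factors of $\|\nabla\Delta u\|_{L^2_x}$ gives
\begin{align*}
\frac{d}{dt}\|\Delta u\|_{L^2_x}^2+\|\nabla\Delta u\|_{L^2_x}^2
\lesssim \|u\|_{L^{\frac{2(d+2)}{d-2}}_x}^{\frac{2(d+2)}{d-2}}\,\|\Delta u\|_{L^2_x}^2,
\end{align*}
where the coefficient is exactly the $S$-norm integrand, hence integrable on $\mathbb{R}_+$ with $\int_0^\infty\|u\|_{L^{2(d+2)/(d-2)}_x}^{2(d+2)/(d-2)}\,\mathrm{d}t=\|u\|_{S(\mathbb{R}_+)}^{2(d+2)/(d-2)}\le C(\|u_0\|_{\dot H^1})$ by Theorem \ref{th2.4v19}. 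Gronwall then gives the global $L^\infty_t\dot H^2_x$ bound, and integrating the dissipation gives $L^2_t\dot H^3_x$, both with constants depending only on $\|u_0\|_{H^2}$. With this repair your argument is arguably cleaner than the paper's, since it avoids the interval-by-interval Strichartz induction for $\nabla^2 u$ (the paper instead absorbs the quadratic-in-$\nabla^2 u$ terms using the smallness $\delta_0^{4/(d-2)}$ on each interval). Two minor points: your concern that $f$ is not smooth at the origin when $d=3$ is unfounded, since $f(u)=|u|^4u=(u\bar u)^2u$ is a polynomial in $(u,\bar u)$; and the rigorous justification of the formal computation can indeed be done by the approximation/persistence argument you sketch.
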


\begin{proof}
  From Theorem \ref{th2.4v19}, we know that
  \begin{align*}
    \|u\|_{ S(\mathbb{R}_+) } + \|\nabla u\|_{L^{\f{2(d+2)}{d-2}}_t L^{\f{2d(d+2)}{d^2+4}}_x (\mathbb{R}_+ \times \mathbb{R}^d) }
    \le C(\|u_0\|_{\dot{H}^1}).
  \end{align*}
For any sufficiently small $\delta_0 >0$, we can divide $\mathbb{R_{+}}$ into a finite number of disjoint intervals, $\mathbb{R_{+}} : = \bigcup_{j=1}^N I_j$, where $I_j = [ t_{j-1}, t_j]$ and $N = C( \|u_0 \|_{\dot{H}^1} )$, so that
  \begin{align*}
    \|u\|_{S(I_j) } + \|\nabla u\|_{L^{\f{2(d+2)}{d-2}}_t L_x^{\f{2d(d+2)}{d^2+4}}
    (I_j \times \mathbb{R}^d) }
    \le \delta_0 , \quad j = 1, \ldots, N.
  \end{align*}
By Lemma \ref{lem:strichartz}, we have
\begin{align}\label{est:h_heat1}
    \begin{aligned}
      \left\|\nabla^2 u \right\| _{L^{\infty}_t L^2_x \bigcap L^2_t L^{\f{2d}{d-2}}_x (I_j \times \mathbb{R}^d)}
      & \le \left\| \nabla^2 u(t_{j-1}) \right\|_{L^2} +
      C
      \left\| |u|^{\f 4{d-2}} \nabla^2 u \right\|_{L^{\f{2d+4}{d+4}}_{t,x}(I_j \times \mathbb{R}^d)} \\
      &\quad+ C
      \left\| |u|^{\f{6-d}{d-2}} |\nabla u|^2 \right\|_{ L^2_t L^{\f{2d}{d-2}}_x (I_j \times \mathbb{R}^d) }.
    \end{aligned}
  \end{align}
 Then H\"older's inequality allows us to estimate
\begin{align}\label{est:h_heat2}
    \begin{aligned}
      \left\| |u|^{\f 4{d-2}} \nabla^2 u \right\|_{L^{\f{2d+4}{d+4}}_{t,x}(I_j \times \mathbb{R}^d)}
      &\le \|u\|_{S(I_j) }^{\f4{d-2}}
      \left\|\nabla^2 u \right\|_{L^{\f{2d+4}d}_{t,x}(I_j \times \mathbb{R}^d)} \\
    &\le \delta_0^{\f 4{d-2}}
      \left\|\nabla^2 u \right\|_{L^{\infty}_t L^2_x \bigcap L^2_t L^{\f{2d}{d-2}}_x (I_j \times \mathbb{R}^d)}.
    \end{aligned}
  \end{align}
  By the H\"older inequality and the Sobolev inequality, we have
\begin{align}\label{est:h_heat3}
    \begin{aligned}
      &\left\| |u|^{\f{6-d}{d-2}} |\nabla u|^2 \right\|_{ L^2_t L^{\f{2d}{d-2}}_x (I_j \times \mathbb{R}^d) }\\
      & \le
      \|u\|_{ L^{\f{2d+4}{d-2}}_{t,x} (I_j \times \mathbb{R}^d) }^{\f{6-d}{d-2}} \|\nabla u\|_{ L^{\f{2d+4}{d-2}}_tL^{\f{2d(d+2)}{d^2+4}}_x(I_j \times \mathbb{R}^d)} \left\|\nabla^2 u \right\|_{ L^{\f{2d+4}{d-2}}_tL^{\f{2d(d+2)}{d^2+4}}_x(I_j \times \mathbb{R}^d)}\\
      &\le \delta_0^{\f 4{d-2}}
      \left\|\nabla^2 u \right\|_{L^{\infty}_t L^2_x \bigcap L^2_t L^{\f{2d}{d-2}}_x (I_j \times \mathbb{R}^d)}.
    \end{aligned}
  \end{align}
  Hence, combining \eqref{est:h_heat1}, \eqref{est:h_heat2}, and \eqref{est:h_heat3}, we have
\begin{align}\label{est:h_heat4}
    \left\|\nabla^2 u \right\|_{L^{\infty}_t L^2_x \bigcap L^2_t L^{\f{2d}{d-2}}_x (I_j \times \mathbb{R}^d)}
    \le \left\| \nabla^2 u(t_{j-1}) \right\|_{L^2}
    + 2 C \delta_0^{\f 4{d-2}}
    \left\|\nabla^2 u \right\|_{L_t^\infty L_x^2 \cap L_t^2 L_x^\frac{2d}{d - 2}
    (I_j \times \mathbb{R}^d)}.
  \end{align}
When $j=1$,  $u(t_{j-1}) = u_0$ and so \eqref{est:h_heat4} yields
  \begin{align*}
    \left\|\nabla^2 u \right\|_{L^{\infty}_t L^2_x \bigcap L^2_t L^{\f{2d}{d-2}}_x (I_1 \times \mathbb{R}^d)} \le 2 \left\| \nabla^2 u_0 \right\|_{L^2},
  \end{align*}
  which in particular implies $ \left\| \nabla^2 u(t_1) \right\|_{L^2} \le 2
  \left\| \nabla^2 u_0 \right\|_{L^2}$.

  Therefore, repeating this procedure finitely many times, we may deduce
  \begin{align*}
    \left\|\nabla^2 u \right\|_{L^{\infty}_t L^2_x \bigcap L^2_t L^{\f{2d}{d-2}}_x (I_j \times \mathbb{R}^d)} \le 2^j \left\|\nabla^2u_0 \right\|_{L^2}, \quad j=1, \ldots, N.
  \end{align*}
  Hence,
\begin{align}\label{est:h_heat11}
    \begin{aligned}
      \left\|\nabla^2 u \right\| _{L^{\infty}_t L^2_x ( \mathbb{R_{+}} \times \mathbb{R}^d)} \le
      \sum_{j=1}^N 2^j  \left\|\nabla^2u_0 \right\|_{L^2} \le C \left(\|u_0\|_{\dot{H}^1} \right) \|u_0\|_{\dot{H}^2}.
    \end{aligned}
  \end{align}
 
Next, we estimate $\|u\|_{L^2_t\dot{H}^3_x}$. Taking $\Delta$ on both sides of \eqref{eq:h}, and then multiplying by $\overline{ \Delta u}$, we arrive at the energy formula
  \begin{align}\label{energy:higher1}
  \begin{aligned}
    \frac{1}{2} \frac{d}{dt} \int_{\mathbb{R}^d} |\Delta u|^2 \,\mathrm{d}x+ \int_{\mathbb{R}^d} |\nabla\Delta u|^2 \,\mathrm{d}x & \lesssim \int_{\mathbb{R}^d}
    |\Delta u|^2|u|^{\frac{4}{d-2}} + |u|^{\frac{6-d}{d-2}}|\nabla u|^2 |\Delta u| \,\mathrm{d}x .
    \end{aligned}
  \end{align}
  The first integral on the right-hand side of \eqref{energy:higher1} can be bounded by
  \begin{align}\label{energy:higher2}
    \int_{\mathbb{R}^d} |\Delta u|^2|u|^{\frac{4}{d-2}} \,\mathrm{d}x
    \le  \|\Delta u\|_{L^{\frac{2(d+2)}{d}}_x}^2 \|u\|_{L^{\frac{2(d+2)}{d-2}}_x}^{\frac{4}{d-2}}.
  \end{align}
  The second integral on the right-hand side of \eqref{energy:higher1} can be estimated as follows
  \begin{align}
  \label{energy:higher3}
    \int_{\mathbb{R}^d}  |u|^{\frac{6-d}{d-2}}|\nabla u|^2 |\Delta u| \,\mathrm{d}x  \le
    \begin{cases}
     \|u\|_{L^\infty}^3 \|\nabla u\|_{L^6}^2\|\Delta u\|_{L^2} \lesssim \|\nabla^2 u\|_{L^2}^6, & d = 3, \\
    \|u\|_{L^4} \|\nabla u\|_{L^4}^2\|\Delta u\|_{L^4} \lesssim \|\nabla^2 u\|_{L^2}^3 \|\nabla \Delta u\|_{L^2}, & d = 4.
    \end{cases}
  \end{align}
  Integrating \eqref{energy:higher1} on $I_j$, and using \eqref{energy:higher2} and \eqref{energy:higher3}, we obtain
  \begin{align}\label{dissipation:higher1}
  \begin{aligned}
      \|\nabla\Delta u\|_{L^2_{t,x }
      (I_j\times \mathbb{R}^d)}^2
      & \lesssim \|\Delta u(t_{j-1})\|_{L^2}^2 + \|\Delta u\|_{L^{\frac{2(d+2)}{d}}_{t,x}(I_j\times \mathbb{R}^d)}^{\frac{d}{d+2}}
      \|u\|_{S (I_j )}^{\frac{2}{d+2}} \\
   &\quad+ \|\Delta u\|_{L^2_{t,x }(I_j\times \mathbb{R}^d)}^2 \|\Delta u\|_{L^\infty_t L^2_x(I_j\times \mathbb{R}^d)}^4.
    \end{aligned}
  \end{align}
Thus, repeating this procedure from $j=1, \ldots, N$, and then combing it with the bound of $\|\Delta u\|_{L^2_{t,x}}$ in Propositions \ref{prop:df_heat} and \ref{prop:f_heat}, and also the bound for $\|\Delta u\|_{L^\infty_t L^2_x}$ in \eqref{est:h_heat11}, we eventually obtain 
  \begin{align}\label{dissipation:higher2}
  \begin{aligned}
      \|\nabla\Delta u\|_{L^2_{t,x } (\mathbb{R}_+\times \mathbb{R}^d)}
     \le   C(\|u_0\|_{H^1})
     \left (\|u_0\|_{\dot{H}^2} +\|u_0\|_{\dot{H}^2}^3 \right).
    \end{aligned}
  \end{align}

  \end{proof}

\subsection{Proof of Theorem \ref{thm:df_heat}}
In this subsection, we complete the proof of Theorem \ref{thm:df_heat}.

Set $w =  v^\theta - u $. It is clear that $w$ satisfies
\begin{align}\label{eq:difference1}
  \begin{cases}
e^{-i\theta}w_t -  \Delta w = \mu \left( f(v^\theta) - f(u) \right)
-  \left(e^{-i\theta}-1 \right) \left(\Delta u - \mu f(u) \right), \\
w(0,x) = w_0(x) =  v^\theta_0(x) - u_0(x).
\end{cases}
\end{align}
For simplicity, we write $F(v^\theta, u)=f \left(v^\theta \right) - f(u)-f(w)$. By direct computation, we know that
when $d=3$,
\begin{align}\label{eq:df_non2}
\begin{aligned}
  F \left(v^\theta, u \right)
  &=  \left(\bar{w}u + w\bar{u} \right)^2v^\theta + 2|w|^2|u|^2v^\theta + 2 \left(|w|^2+|u|^2 \right) \left(\bar{w}u + w\overline{v^\theta }
  \right) v^\theta + |u|^4 v^\theta\\
  &\quad -\left|v^\theta \right|^4u - 2|w|^2 \left|v^\theta \right|^2u - \left(\bar{w }
  v^\theta + w\overline{v^\theta }
  \right)^2u \\
  &\quad+ 2 \left(|w|^2+ \left|v^\theta \right|^2 \right) \left(\bar{w}v^\theta + w\overline{v^\theta }
  \right)u,
\end{aligned}
\end{align}
and
when $d=4$,
\begin{align}\label{eq:df_non1}
   F\left(v^\theta, u \right) = w \left(\overline{v^\theta }
   u + \bar{u} v^\theta \right) +2\bar{w}v^\theta u.
\end{align}

\begin{lemma}\label{prop:df_diff}
  Let $d = 3, 4$. Suppose that $w$ satisfies \eqref{eq:difference1}. There exists a small $\varepsilon > 0 $ such that if $\|w_0\|_{H^1} \le \varepsilon$, then for each $T>0$, we have
\begin{align}\label{est:df_w}
    \|w\|_{S([0, T]) } \le C=C\left(T,\|w_0\|_{\dot{H}^1}, \|u_0\|_{\dot{H}^2} \right).
  \end{align}
\end{lemma}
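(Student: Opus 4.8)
The plan is to run a Strichartz/bootstrap (continuity) argument on the difference equation \eqref{eq:difference1} for $w = v^\theta - u$, controlling the $S$-norm of $w$ on $[0,T]$ in terms of the data. I would first localize: partition $[0,T]$ into finitely many subintervals $I_k = [t_{k-1}, t_k]$ on which the already-known global space-time bounds for $v^\theta$ (Theorem \ref{th2.3v19}, Proposition \ref{prop:df_gl}/\ref{pr2.6v22}) and for $u$ (Theorem \ref{th2.4v19}, Proposition \ref{prop:higher_heat}) guarantee that $\|v^\theta\|_{S(I_k)}$ and $\|u\|_{S(I_k)}$ are each smaller than a threshold $\delta_0$. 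The number $N$ of such intervals is controlled by $\|v^\theta_0\|_{\dot H^1}$ and $\|u_0\|_{\dot H^1}$, hence (via the smallness $\|w_0\|_{H^1}\le\varepsilon$) by $\|u_0\|_{\dot H^1}$ alone. On each $I_k$, I would apply the Strichartz estimate of Lemma \ref{lem:strichartz} to the equation for $w$, with $t_0 = t_{k-1}$.

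The heart of the estimate is bounding the two source terms on the right-hand side of \eqref{eq:difference1} in the dual Strichartz norm. For the nonlinear difference $\mu(f(v^\theta)-f(u))$, I would write $f(v^\theta)-f(u) = f(w) + F(v^\theta,u)$ using the decomposition introduced just above the lemma, with $F$ given explicitly by \eqref{eq:df_non2} (for $d=3$) and \eqref{eq:df_non1} (for $d=4$). The term $f(w)$ is cubic (resp. quadratic-type) in $w$ and gets absorbed, on each small subinterval, into the left-hand side via the $\delta_0$-smallness of $\|w\|_{S(I_k)}$ exactly as in the higher-regularity proof (estimates \eqref{est:h_heat2}--\eqref{est:h_heat4}); this is where the bootstrap closes. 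The remaining pieces of $F(v^\theta,u)$ are at least linear in $w$ but carry at least one factor of $v^\theta$ or $u$, so by H\"older they are bounded by $\|w\|_{S(I_k)}$ times powers of $\|v^\theta\|_{S(I_k)}$ and $\|u\|_{S(I_k)}\le\delta_0$, again absorbable. The linear forcing $(e^{-i\theta}-1)(\Delta u - \mu f(u))$ is the genuinely inhomogeneous term: I would estimate it in the appropriate dual norm using the higher regularity $\|u\|_{L^\infty_t H^2_x}+\|u\|_{L^2_t\dot H^3_x}\le C(\|u_0\|_{H^2})$ from Proposition \ref{prop:higher_heat} together with the $S$-norm control of $f(u)$, picking up a prefactor $|e^{-i\theta}-1|$ and a factor depending on $T$ (through the time integration of the $\Delta u$ and $f(u)$ terms over $[0,T]$).

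Assembling these, on each $I_k$ I expect an inequality of the schematic form
\begin{align*}
  \|w\|_{S(I_k)} \le C\|w(t_{k-1})\|_{\dot H^1} + C\delta_0^{\alpha}\|w\|_{S(I_k)} + C|e^{-i\theta}-1|\,g(T),
\end{align*}
and choosing $\delta_0$ small makes $C\delta_0^\alpha \le \tfrac12$, so the middle term is absorbed. Iterating over $k=1,\dots,N$ and summing (each step at worst doubling the accumulated constant, as in \eqref{est:h_heat11}) yields \eqref{est:df_w} with a constant depending on $N$, hence on $T$, $\|w_0\|_{\dot H^1}$ and $\|u_0\|_{\dot H^2}$.

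**The main obstacle** I anticipate is the bookkeeping of the mixed terms in $F(v^\theta,u)$: one must verify that \emph{every} summand in \eqref{eq:df_non2}/\eqref{eq:df_non1} either carries enough powers of $w$ to be absorbed by smallness, or enough factors of the controlled quantities $v^\theta,u$ to be estimated by $\delta_0$ times $\|w\|_{S}$, with the H\"older exponents matching the admissible Strichartz pairs of Lemma \ref{lem:strichartz} in both $d=3$ and $d=4$. A secondary subtlety is tracking the correct power of $T$ (and of $|e^{-i\theta}-1|$) in the forcing estimate, since this is what ultimately feeds the constant $C_0$ and the time restriction $T\in(0,\varepsilon^2|e^{i\theta}-1|^{-2})$ appearing in Theorem \ref{thm:df_heat}; I would keep this dependence explicit throughout rather than absorbing it into a generic constant.
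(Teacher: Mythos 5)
Your proposal is correct and takes essentially the same route as the paper: partition $[0,T]$ into finitely many intervals on which the heat solution is small in critical Strichartz norms, apply gradient-level Strichartz to the difference equation \eqref{eq:difference1}, bound the forcing by $|e^{i\theta}-1|$ times $T^{1/2}$ times the higher regularity of $u$ from Proposition \ref{prop:higher_heat}, close each interval by a continuity/bootstrap argument absorbing the superlinear-in-$w$ terms, iterate with geometrically growing constants (choosing $\varepsilon$ so that $(4C)^N\varepsilon$ stays below the smallness threshold), and finish with Sobolev embedding. The only cosmetic difference is that the paper substitutes $v^\theta = w+u$ into the pointwise bound for $\nabla\left( f\left(v^\theta\right)-f(u)\right)$, so smallness is needed only for $u$ and the bootstrapped $w$, whereas you also partition adapted to $v^\theta$; both variants work since the global Strichartz bounds for $v^\theta$ are uniform in $\theta$.
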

\begin{proof}
We can divide $[0, T]$ into a finite number $N = C( \|u_0 \|_{\dot{H}^1} ) $ of disjoint intervals  $(I_j)_{j=1}^N$ such that the solution $u$ of \eqref{eq:h}
satisfies
\begin{align}\label{est:df_w1}
  \|u\|_{S(I_j) }+ \|\nabla u\|_{L^{\f{2d+4}{d-2}}_t L^{\f{2d(d+2)}{d^2+4}}_x(I_j \times \mathbb{R}^d)} \le \delta.
\end{align}
Then we may apply the Strichartz estimate to
\eqref{eq:difference1} to obtain 
\begin{align}\label{est:df_w2}
\begin{aligned}
    &\|\nabla w\|_{L^\infty_t L^2_x(I_j \times \mathbb{R}^d)} + \|\nabla w\|_{L^{\f{2(d+2)}{d-2}}_t L^{\f{2d(d+2)}{d^2+4}}_x(I_j \times \mathbb{R}^d)} \\
    & \le \|w(t_{j-1})\|_{\dot{H}^1}+ C |e^{i\theta} -1|
    \left( \left\|\nabla\Delta u \right\|_{L^1_tL^2_x(I_j \times \mathbb{R}^d)} +   \left\|\nabla    f(u)  \right\|_{L^2_t L_x^{\f{2d}{d+2}}(I_j \times \mathbb{R}^d)} \right)   \\
     &\quad + C\left\|\nabla  \left( f \left(v^\theta \right) - f(u)     \right) \right\|_{L^2_t L_x^{\f{2d}{d+2}}(I_j \times \mathbb{R}^d)}.
\end{aligned}
\end{align}
It is clear that
\begin{align}\label{est:df_w3}
  \left\|\nabla\Delta u \right\|_{L^1_tL^2_x(I_j \times \mathbb{R}^d)} \le |I_j|^{1/2} \|u\|_{L^2_t\dot{H}^3_x(I_j \times \mathbb{R}^d)}.
\end{align}
Then H\"older's inequality and \eqref{est:df_w1} give
\begin{align}\label{est:df_w4}
    \begin{aligned}
      \left\|\nabla f(u)
      \right\|_{L^2_t L_x^{\f{2d}{d+2}}(I_j \times \mathbb{R}^d)}
   \le \|u\|_{S(I_j)  }^{\f4{d-2}} \|\nabla u\|_{L^{\f{2d+4}{d-2}}_t L^{\f{2d(d+2)}{d^2+4}}_x(I_j \times \mathbb{R}^d)} \lesssim \delta^{\f{d+2}{d-2}}.
    \end{aligned}
  \end{align}
 By
  \eqref{eq:df_non2} and \eqref{eq:df_non1}, we have
  \begin{align*}
      \left| \nabla \left( f \left( v^\theta \right) - f(u)       \right) \right|
      \lesssim |w+u|^{\f4{d-2}} |\nabla w|
      + \left( |w|^{\f{6-d}{d-2}} + |u|^{\f{6-d}{d-2}}\right) |w||\nabla u|,
  \end{align*}
which implies by H\"older's inequality, \eqref{est:df_w1} and Sobolev's inequality that
  \begin{align}\label{est:df_w5}
    \begin{aligned}
      & \left\|\nabla
      \left( f \left(v^\theta \right) - f(u)
       \right) \right\|_{L^2_t L_x^{\f{2d}{d+2}}(I_j \times \mathbb{R}^d)} \\
     & \lesssim \left( \|\nabla w\|_{L^{\f{2d+4}{d-2}}_t L^{\f{2d(d+2)}{d^2+4}}_x(I_j \times \mathbb{R}^d)}^{\f4{d-2}} + \delta^{\f4{d-2}} \right) \|\nabla w\|_{L^{\f{2d+4}{d-2}}_t L^{\f{2d(d+2)}{d^2+4}}_x(I_j \times \mathbb{R}^d)} \\
      & \quad + \delta \left( \|\nabla w\|_{L^{\f{2d+4}{d-2}}_t L^{\f{2d(d+2)}{d^2+4}}_x(I_j \times \mathbb{R}^d)}^{\f{6-d}{d-2}} + \delta^{\f{6-d}{d-2}} \right) \|\nabla w\|_{L^{\f{2d+4}{d-2}}_t L^{\f{2d(d+2)}{d^2+4}}_x(I_j \times \mathbb{R}^d)} \\
      &\lesssim \|\nabla w\|_{L^{\f{2d+4}{d-2}}_t L^{\f{2d(d+2)}{d^2+4}}_x(I_j \times \mathbb{R}^d)}^{\f{d+2}{d-2}} + \delta^{\f4{d-2}}  \|\nabla w\|_{L^{\f{2d+4}{d-2}}_t L^{\f{2d(d+2)}{d^2+4}}_x(I_j \times \mathbb{R}^d)}\\
        &\quad+ \delta \|\nabla w\|_{L^{\f{2d+4}{d-2}}_t L^{\f{2d(d+2)}{d^2+4}}_x(I_j \times \mathbb{R}^d)}^{\f4{d-2}}.
    \end{aligned}
  \end{align}
  When $j=1$, $w(t_0) = w_0$. Combining the estimates \eqref{est:df_w2}--\eqref{est:df_w5} with Proposition \ref{prop:higher_heat}, we infer that  for a universal constant $C$, it holds 
\begin{align}\label{est:df_w6}
    \begin{aligned}
      &\|\nabla w\|_{L^\infty_t L^2_x(I_1 \times \mathbb{R}^d)} + \|\nabla w\|_{L^{\f{2(d+2)}{d-2}}_t L^{\f{2d(d+2)}{d^2+4}}_x(I_1 \times \mathbb{R}^d)} \\
    & \le  \|w_0\|_{\dot{H}^1}+ C|e^{i\theta} -1|
    \left( T^{1/2} C \left(\|u_0\|_{\dot{H}^1} \right)\|u_0\|_{\dot{H}^2}+ \delta^{\f{d+2}{d-2}} \right) \\
    &\quad + C\|\nabla w\|_{L^{\f{2d+4}{d-2}}_t L^{\f{2d(d+2)}{d^2+4}}_x(I_1 \times \mathbb{R}^d)}^{\f{d+2}{d-2}} + C\delta^{\f4{d-2}}
    \|\nabla w\|_{L^{\f{2d+4}{d-2}}_t L^{\f{2d(d+2)}{d^2+4}}_x(I_1 \times \mathbb{R}^d)}\\
      &\quad + C\delta \|\nabla w\|_{L^{\f{2d+4}{d-2}}_t L^{\f{2d(d+2)}{d^2+4}}_x(I_1 \times \mathbb{R}^d)}^{\f4{d-2}}.
    \end{aligned}
  \end{align}

  We choose a small positive constant $\varepsilon$ such that $\|w_0\|_{\dot{H}^1} \le \f\varepsilon2$ and $|e^{i\theta}-1| T^{1/2} \le \f\varepsilon4$. By a standard continuity method, \eqref{est:df_w6} is reduced to
\begin{align}\label{est:df_w7}
    \|\nabla w\|_{L^\infty_t L^2_x(I_1 \times \mathbb{R}^d)} + \|\nabla w\|_{L^{\f{2(d+2)}{d-2}}_t L^{\f{2d(d+2)}{d^2+4}}_x(I_1 \times \mathbb{R}^d)} \le 2C\varepsilon.
  \end{align}
  In particular, \eqref{est:df_w7} gives the bound $\|\nabla w (t_1)\|_{L^2} \le 2C\varepsilon$. Hence by a finite induction argument corresponding to the finite partition of $[0, T]$, we have
  \begin{align*}
    \|\nabla w\|_{L^\infty_t L^2_x(I_N \times \mathbb{R}^d)} + \|\nabla w\|_{L^{\f{2(d+2)}{d-2}}_t L^{\f{2d(d+2)}{d^2+4}}_x(I_N \times \mathbb{R}^d)} \le (4C)^N\varepsilon.
  \end{align*}
Choosing $\epsilon$ small enough such that $(4C)^N\varepsilon \le \delta$, we thus derive
  \begin{align*}
    \|\nabla w\|_{L^\infty_t L^2_x([0,T] \times \mathbb{R}^d)} + \|\nabla w\|_{L^{\f{2(d+2)}{d-2}}_t L^{\f{2d(d+2)}{d^2+4}}_x([0,T] \times \mathbb{R}^d)} \le C    \left(T,\|w_0\|_{\dot{H}^1}, \|u_0\|_{\dot{H}^2}\right),
  \end{align*}
which along with Sobolev's inequality  implies \eqref{est:df_w}.
\end{proof}

\begin{proof}[Proof of Theorem \ref{thm:df_heat}.]

  From the Strichartz estimate and H\"older's  inequality, we may estimate
  \begin{align}\label{est:main1_1}
    \begin{aligned}
      \|w\|_{L^\infty_t L^2_x \cap L^2_tL^{\f{2d}{d-2}}_x([0,T] \times \mathbb{R}^d)}
      &\le
      \|w_0\|_{L^2} + C |e^{i\theta} -1|
      \bigg( \|\Delta u\|_{L^1_tL^2_x([0,T] \times \mathbb{R}^d)} \\
      &\quad+
C \left\| f(u) \right\|_{L_{t,x}^{\f{2d+4}{d+4}}([0,T] \times \mathbb{R}^d)} \bigg)\\
      &\quad
      + C  \left\|\nabla \left( f \left(v^\theta \right) - f(u)
      \right) \right\|_{L_{t,x}^{\f{2d+4}{d+4}}([0,T] \times \mathbb{R}^d)} \\
      &\le  \|w_0\|_{L^2} + C  |e^{i\theta} -1| |I|\|\Delta u\|_{L^\infty_tL^2_x([0,T] \times \mathbb{R}^d)} \\
      &\quad + C \left|e^{i\theta} -1 \right|
      \|u\|_{ S([0, T]) }^{\f4{d-2}} \|u\|_{L^{\f{2d+4}d}_{t,x}([0,T] \times \mathbb{R}^d)}\\
      &\quad + C \left( \|w\|_{
      S([0, T])  }^{\f4{d-2}} + \|u\|_{
      S([0, T]) }^{\f4{d-2}} \right) \| w\|_{L^{\f{2(d+2)}{d}}_{t,x}([0,T] \times \mathbb{R}^d)}.
    \end{aligned}
  \end{align}
 Lemma \ref{prop:df_diff} and Theorem \ref{th2.4v19} allow us to divide the interval $[0, T]$ into finite disjoint intervals $I_j$, $j=1, \ldots, N$, such that
\begin{align}\label{est:main1_2}
    C\left( \|w\|_{
    S(I_j) }^{\f4{d-2}} + \|u\|_{
    S(I_j)  }^{\f4{d-2}} \right) \le \f12, \quad j=1, \ldots, N,
  \end{align}
  where $C$ is the same as in \eqref{est:main1_1}. Then \eqref{est:main1_1} can be used to derive
\begin{align}\label{est:main1_3}
    \begin{aligned}
      \|w\|_{L^\infty_t L^2_x \cap L^2_tL^{\f{2d}{d-2}}_x(I_j \times \mathbb{R}^d)}
     & \le \|w(t_{j-1})\|_{L^2} + C |e^{i\theta} -1||I_j|^{1/2} \|\Delta u\|_{L^2_{t , x}
      (I_j \times \mathbb{R}^d)} \\
      & \quad + \f12 |e^{i\theta} -1| \|u\|_{L^{\f{2d+4}d}_{t,x}(I_j \times \mathbb{R}^d)} + \f12 \| w\|_{L^{\f{2(d+2)}{d}}_{t,x}(I_j \times \mathbb{R}^d)}.
    \end{aligned}
  \end{align}
This together with the interpolation
  \begin{align}
    \| w\|_{L^{\f{2(d+2)}{d}}_{t,x}(I_j \times \mathbb{R}^d)} \le \|w\|_{L^\infty_t L^2_x \cap L^2_tL^{\f{2d}{d-2}}_x(I_j \times \mathbb{R}^d)}
  \end{align}
implies
\begin{align}\label{est:main1_4}
    \begin{aligned}
      \|w\|_{L^\infty_t L^2_x \cap L^2_tL^{\f{2d}{d-2}}_x(I_j \times \mathbb{R}^d)} &\le 2 \|w\|_{L^\infty_t L^2_x \cap L^2_tL^{\f{2d}{d-2}}_x(I_{j-1} \times \mathbb{R}^d)} + C |e^{i\theta} -1| \|u\|_{L^{\f{2d+4}d}_{t,x}(I_j \times \mathbb{R}^d)}\\
      & \quad + 2C \left|e^{i\theta} -1 \right||I_j|^{1/2} \|\Delta u\|_{L^2_{t, x} (I_j \times \mathbb{R}^d)}.
    \end{aligned}
  \end{align}
  Hence, by iteration, for $j= 1, \ldots, N$, we have
\begin{align}\label{est:main1_5}
    \begin{aligned}
      \|w\|_{L^\infty_t L^2_x \cap L^2_tL^{\f{2d}{d-2}}_x(I_j \times \mathbb{R}^d)}
      & \le 2^j \|w_0\|_{L^2} + C \left|e^{i\theta} -1 \right| \sum_{k=1}^j 2^k |I_{j-k+1}|^{1/2} \|\Delta u\|_{L^2_{t, x} (I_{j-k+1} \times \mathbb{R}^d)} \\
      & \quad + C  |e^{i\theta} -1| \sum_{k=1}^j 2^{k - 1}  \|u\|_{L^{\f{2d+4}d}_{t,x}(I_{j-k+1} \times \mathbb{R}^d)}.
    \end{aligned}
  \end{align}
  Summing over $j$ and using the estimate
  \begin{align}
    \sup_{k\le j} \|u\|_{L^{\f{2d+4}d}_{t,x}(I_{j-k+1} \times \mathbb{R}^d)} \le C(E(u_0)) \|u_0\|_{\dot{H}^2},
  \end{align}
   which follows from the Strichartz estimate and \eqref{est:main1_2},
  we conclude 
  \begin{align}\label{eq3.33v21}
    \begin{aligned}
      \|w\|_{L^\infty_t L^2_x ([0, T] \times \mathbb{R}^d)}
      &\le  C 2^{N+2} \big(\|w_0\|_{L^2} + |e^{i\theta} -1|T^{1/2} \|\Delta u\|_{L^2_{t, x} ([0, T] \times \mathbb{R}^d)}\\
      &\quad + |e^{i\theta} -1| \|u_0\|_{\dot{H}^2} \big),
    \end{aligned}
  \end{align}
  where $N$ is independent of $\theta$.
  This together with \eqref{est:h_heat11} implies the result for $L^2$. The approximated result for $\dot{H}^1$ is similar to the $L^2$ case and can be essentially derived by Lemma \ref{prop:df_diff}, so we omit the details here.
\end{proof}

\section{ Proof of the invisicid limit theory}
In this section, we show the inviscid limit of \eqref{eq:g-l}, that is Theorem \ref{thm:f_schrodinger}. The proof relies on the following global well-posedness and scattering of energy critical NLS.

\begin{theorem}[GWP \& scattering of 4 dimensional focusing energy-critical NLS, \cite{D2019}]
Fix $d  = 4$, $\mu = - 1$ and $v_0 \in \dot{H}^1.$ If
\begin{align*}
    E(v_0) <E(W)\quad\text{and} \quad \|v_0 \|_{\dot{H}^1} < \|W\|_{\dot{H}^1},
\end{align*}
then the solution $v$ to \eqref{eq:s} is global and satisfies
\begin{align*}
\|v\|_{S(\mathbb{R} ) } \le C=C \left(\|v_0 \|_{\dot{H}^1} \right).
\end{align*}

\end{theorem}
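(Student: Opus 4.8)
The plan is to follow the concentration-compactness/rigidity scheme of Kenig and Merle, supplemented by the long-time Strichartz and interaction-Morawetz machinery needed to handle the \emph{non-radial} focusing problem in four dimensions. Throughout $d=4$, so the nonlinearity is cubic, $f(v)=|v|^2v$ (since $\frac{4}{d-2}=2$), and the scaling-critical space-time norm is $\|v\|_{S(I)}=\|v\|_{L^6_{t,x}(I\times\mathbb{R}^4)}$. First I would record the standard local theory obtained from the dispersive form of the Strichartz estimate (the NLS counterpart of Lemma \ref{lem:strichartz}): every $\dot{H}^1$ datum launches a unique maximal-lifespan solution, and such a solution extends globally and scatters precisely when $\|v\|_{S}$ remains finite; small-data theory secures this when $\|v_0\|_{\dot H^1}$ is small. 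It therefore suffices to establish the \emph{a priori} bound $\|v\|_{S(\mathbb{R})}\le C(\|v_0\|_{\dot H^1})$.

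The next step is energy trapping. Using the sharp Sobolev/Gagliardo--Nirenberg inequality, whose optimiser is $W$, the two hypotheses $E(v_0)<E(W)$ and $\|v_0\|_{\dot H^1}<\|W\|_{\dot H^1}$ are shown to be preserved by the (energy-conserving) NLS flow and to force a uniform gap $\sup_t\|\nabla v(t)\|_{L^2}\le(1-\delta)\|\nabla W\|_{L^2}$ together with the coercivity furnished by the variational estimates (the NLS analogue of Lemma \ref{le2.1v3}). This confines the solution strictly below the ground-state threshold for all time and will supply the positivity that the focusing Morawetz argument otherwise lacks.

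Assuming the $S$-bound fails, I would argue by contradiction in the usual way: there is a critical threshold $E_c\le E(W)$ below which all solutions scatter but at which scattering just fails. Feeding a minimising sequence into the linear and nonlinear profile decompositions and invoking the perturbation/stability theory, one extracts a minimal non-scattering ``critical element'' $v_c$ whose orbit is precompact in $\dot H^1$ modulo the scaling and translation symmetries; equivalently there are $N(t)>0$ and $x(t)\in\mathbb{R}^4$ such that $N(t)^{-1}v_c\bigl(t,N(t)^{-1}(\cdot-x(t))\bigr)$ ranges over a precompact set in $\dot H^1$. Energy trapping applies to $v_c$ as well, so all the coercivity bounds of the previous step remain available.

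The final, and genuinely hard, step is to exclude this almost periodic solution. The route is to prove a frequency-localised long-time Strichartz estimate controlling the high-frequency part of $v_c$, use it to upgrade $v_c$ to extra regularity and decay, and then run an interaction-Morawetz (double-Duhamel) argument whose monotone quantity has total variation bounded by the conserved energy; this forces $v_c\equiv 0$, contradicting $\|v_c\|_S=\infty$. I expect this to be the main obstacle, for two reasons: in the focusing case one cannot exploit a favourable sign of the nonlinearity in the Morawetz identity, so the coercivity from energy trapping must be used to absorb the bad terms; and, without radial symmetry, one must track the compactness parameters $N(t),x(t)$, which is exactly the difficulty the long-time Strichartz estimate is engineered to overcome. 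This frequency-localised analysis is the essential analytic input of \cite{D2019} and the reason the theorem is quoted rather than reproved here.
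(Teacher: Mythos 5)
The paper offers no proof of this statement at all: it is imported verbatim from Dodson \cite{D2019}, which is precisely why the citation appears in the theorem's header, so there is no internal argument to compare against. Your sketch is a faithful outline of the strategy of that cited reference---Kenig--Merle energy trapping and concentration compactness to extract an almost periodic critical element, followed by long-time Strichartz estimates and a frequency-localised interaction Morawetz argument to rule it out in the non-radial focusing setting---and you correctly identify that this rigidity step is the essential content deferred to \cite{D2019}, so your proposal and the paper ultimately rest on the same external source.
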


We now give the higher regularity of NLS.
\subsection{Higher regularity of NLS}
\begin{proposition}[Higher regularity for NLS] \label{prop:higher_nls}
  Suppose that $v$ is a solution to  \eqref{eq:s} and that $v_0 \in H^3$.
  In addition, we assume
  \begin{align*}
   E(v_0) < E(W)\quad\text{and} \quad\|v_0 \|_{\dot{H}^1} < \|W\|_{\dot{H}^1}.
  \end{align*}
  Then for $k=2, 3$, we have
  \begin{align}\label{eq3.16v16}
    \|v\|_{L^\infty_t H^k_x}
    \le C=C\left(\|v_0\|_{H^k} \right).
  \end{align}
\end{proposition}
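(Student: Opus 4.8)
The plan is to mirror the persistence-of-regularity argument of Proposition \ref{prop:higher_heat}, replacing the heat Strichartz estimates by those for the Schr\"odinger propagator and exploiting that in $d=4$ the nonlinearity $f(v)=|v|^{2}v=v^{2}\bar v$ is a \emph{polynomial} in $(v,\bar v)$, so arbitrarily many derivatives can be taken without any loss of smoothness. Since \eqref{eq:s} is conservative there is no dissipative smoothing to exploit; the whole estimate must be closed through Strichartz alone, which is why one only expects $L^\infty_t\dot H^k_x$ control (and not the $L^2_t\dot H^{k+1}_x$ gain available for the heat flow).

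First I would record the inputs. From the quoted global well-posedness and scattering theorem, $\|v\|_{S(\mathbb R)}\le C(\|v_0\|_{\dot H^1})$, whence by Strichartz $\|\nabla v\|_{L^6_tL^{12/5}_x}\le C(\|v_0\|_{\dot H^1})$; here $(\infty,2)$, $(2,4)$ and $(6,12/5)$ are Schr\"odinger-admissible pairs in dimension four. Mass conservation gives $\|v\|_{L^\infty_tL^2_x}=\|v_0\|_{L^2}$, which supplies the low-order part of the $H^k$ norm. Fixing a small $\delta_0>0$, I would partition $\mathbb R$ into $N=C(\|v_0\|_{\dot H^1})$ consecutive intervals $I_j=[t_{j-1},t_j]$ on each of which $\|v\|_{S(I_j)}\le\delta_0$.

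Next, on each $I_j$ I would apply $|\nabla|^{k}$ to \eqref{eq:s} (which commutes with $i\partial_t+\Delta$) and run the Strichartz estimate for the resulting forced Schr\"odinger equation,
\begin{align*}
\big\||\nabla|^{k}v\big\|_{L^\infty_tL^2_x\cap L^2_tL^4_x(I_j)}
\le \big\||\nabla|^{k}v(t_{j-1})\big\|_{L^2}
+C\big\||\nabla|^{k}f(v)\big\|_{L^2_tL^{4/3}_x(I_j)}.
\end{align*}
The heart of the matter is the fractional-Leibniz (Kato--Ponce) estimate for the cubic nonlinearity,
\begin{align*}
\big\||\nabla|^{k}f(v)\big\|_{L^2_tL^{4/3}_x(I_j)}
\lesssim \|v\|_{S(I_j)}^2\,\big\||\nabla|^{k}v\big\|_{L^6_tL^{12/5}_x(I_j)}
\lesssim \delta_0^2\,\big\||\nabla|^{k}v\big\|_{L^\infty_tL^2_x\cap L^2_tL^4_x(I_j)},
\end{align*}
where the last step uses the interpolation $\||\nabla|^k v\|_{L^6_tL^{12/5}_x}\le\||\nabla|^k v\|_{L^\infty_tL^2_x}^{2/3}\||\nabla|^k v\|_{L^2_tL^4_x}^{1/3}$. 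Choosing $\delta_0$ so small that $C\delta_0^2\le\tfrac12$, the top-order term is absorbed into the left-hand side, yielding $\||\nabla|^k v\|_{L^\infty_tL^2_x\cap L^2_tL^4_x(I_j)}\le 2\||\nabla|^k v(t_{j-1})\|_{L^2}$, and in particular $\||\nabla|^k v(t_j)\|_{L^2}\le 2\||\nabla|^k v(t_{j-1})\|_{L^2}$. Iterating across the $N$ intervals gives $\|v\|_{L^\infty_t\dot H^k_x(\mathbb R)}\le 2^{N}\|v_0\|_{\dot H^k}$, which combined with mass conservation yields \eqref{eq3.16v16} for $k=2,3$.

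The main obstacle is the fractional-Leibniz step: one must check that taking $|\nabla|^k$ of $v^2\bar v$ costs only the small scaling-critical factor $\|v\|_{S(I_j)}^2$ times $\||\nabla|^k v\|$ in a companion admissible norm, i.e. that every distribution of the $k$ derivatives among the three factors is dominated by the extremal term carrying all of them on one factor. Because $v^2\bar v$ is a polynomial, this is a genuine product (Kato--Ponce) estimate rather than a chain-rule-for-rough-nonlinearities issue, and the only real care is the bookkeeping of the $d=4$ Hölder exponents that keep the two low-order factors at the scattering level $L^6_{t,x}$. If one prefers to avoid the general inequality, the same conclusion follows from the exact product rule together with an induction on $k$: terms in which no factor carries all the derivatives are controlled using the already-established lower regularity, so that $k=2$ closes via the energy-level bounds and $k=3$ via the $k=2$ bound.
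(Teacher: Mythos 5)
Your proof is correct, and at the top order $k=3$ it takes a genuinely different route from the paper's. The shared skeleton is the decomposition of the time axis into $N=C(\|v_0\|_{\dot H^1})$ intervals on which the scattering norm is small, Strichartz on each $I_j$, absorption of the top-order term, and iteration; for $k=2$ your argument coincides with the paper's, which simply invokes the proof of Proposition \ref{prop:higher_heat}. For $k=3$, however, the paper never distributes three spatial derivatives over the nonlinearity: it differentiates the Duhamel formula \eqref{eq:duhamel2} in time, applies Strichartz to the equation solved by $v_t$ to bound $\|\nabla v_t\|_{L^\infty_tL^2_x\cap L^2_tL^{2d/(d-2)}_x}$ --- so the nonlinear forcing terms in \eqref{est:h_s6}--\eqref{est:h_s7} involve only first-order spatial derivatives of $v$ and $v_t$ and are structurally identical to the $k=2$ estimates --- and then recovers $\|\nabla\Delta v\|_{L^\infty_tL^2_x}$ algebraically from the equation, $\nabla\Delta v=-i\nabla v_t-\mu\nabla f(v)$, together with the data estimate \eqref{est:h_s8}. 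You instead commute $|\nabla|^k$ through \eqref{eq:s} and close with the iterated bilinear Kato--Ponce inequality $\||\nabla|^k(v^2\bar v)\|_{L^{4/3}_x}\lesssim\||\nabla|^k v\|_{L^{12/5}_x}\|v\|^2_{L^6_x}$; your $d=4$ exponents check out (the pairs $(\infty,2)$, $(2,4)$, $(6,12/5)$ are admissible, $(2,4/3)$ is dual-admissible, and $(6,12/5)$ interpolates exactly between $(\infty,2)$ and $(2,4)$ with weights $2/3,1/3$), so absorption closes once $C\delta_0^2\le\tfrac12$. Your fallback via the exact product rule plus induction on $k$ is also sound, since the intermediate terms $\nabla^2v\,\nabla v\,v$ and $(\nabla v)^3$ are controlled by the already-established $k=2$ bound and the $\dot H^1$-level Strichartz norms, and thus contribute only bounded source terms to the iteration. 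What your route buys: uniformity in $k$, smallness needed only for $\|v\|_{S(I_j)}$, no detour through $v_t$, no endpoint terms $\|\nabla f(v(t_{j-1}))\|_{L^2}$ to track, and an explicit treatment of the $L^2$ component via mass conservation (which the paper leaves implicit). What the paper's route buys: no fractional Leibniz rule and essentially no derivative bookkeeping, since every nonlinear estimate stays at the level of a single spatial derivative. Both proofs share the standard, unstated caveat that the absorption step presupposes a priori finiteness of the norm being absorbed on each $I_j$, which is justified by the local theory.
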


\begin{proof}
The proof for the case $k=2$ is the same as that used in Proposition \ref{prop:higher_heat} and thus we only consider the case when $v_0 \in \dot{H}^3$.
Divide $[0, T]$ into a finite number $N$ of disjoint intervals $I_j$ such that the solution $v$ of \eqref{eq:s} satisfies
\begin{align}\label{est:df_s}
  \|v\|_{S(I_j) }+ \|\nabla v \|_{L^{\f{2d+4}{d-2}}_t L^{\f{2d(d+2)}{d^2+4}}_x(I_j \times \mathbb{R}^d)} \le \delta_0.
\end{align}
From the Duhamel formula
\begin{align}\label{eq:duhamel2}
    v(t)  =  e^{it \Delta}
    v_0 - \mu \int_0^t
    e^{i(t- s) \Delta} f(v(s))    \,\mathrm{d}s,
  \end{align}
we know
  \begin{align*}
    \nabla v_t = e^{it \Delta }
    \nabla \Delta v_0 -
    e^{it \Delta} \nabla f(v_0)
    - \mu \int_0^t  e^{i(t- s) \Delta}
    \nabla \pa_s f(v(s))    \,\mathrm{d}s.
  \end{align*}
 Applying the Strichartz estimate on  each $I_j$, we obtain 
\begin{align}\label{est:h_s5}
  \begin{aligned}
    \|\nabla v_t\|_{L^{\infty}_t L^2_x \bigcap L^2_t L^{\f{2d}{d-2}}_x (I_j \times \mathbb{R}^d)}
    & \le
    \left\|v(t_{j-1}) \right\|_{\dot{H}^3} + C \left\|\nabla
    f(u(t_{j-1}) )  \right\|_{L^2}
    \\
    & \quad+ C
    \left\| |v|^{\f 4{d-2}} \nabla v_t \right\|_{L^{\f{2d+4}{d+4}}_{t,x}(I_j \times \mathbb{R}^d)} \\
    & \quad + C \left\| |v|^{\f{6-d}{d-2}} \nabla v v_t \right\|_{ L^2_t L^{\f{2d}{d-2}}_x (I_j \times \mathbb{R}^d) }.
  \end{aligned}
  \end{align}
  By H\"older's inequality, Sobolev's inequality and \eqref{est:df_s}, we deduce
\begin{align}\label{est:h_s6}
  \begin{aligned}
    \left\| |v|^{\f 4{d-2}} \nabla v_t \right\|_{L^{\f{2d+4}{d+4}}_{t,x}(I_j \times \mathbb{R}^d)}
    & \le \|v\|_{S(I_j) }^{\f4{d-2}} \|\nabla v_t\|_{L^{\f{2d+4}d}_{t,x}(I_j \times \mathbb{R}^d)} \\
&\le \delta_0^{\f 4{d-2}} \|\nabla v_t\|_{L^{\infty}_t L^2_x \bigcap L^2_t L^{\f{2d}{d-2}}_x (I_j \times \mathbb{R}^d)},
  \end{aligned}
  \end{align}
  and
\begin{align}\label{est:h_s7}
    \begin{aligned}
      \left\| |v|^{\f{6-d}{d-2}} \nabla v v_t \right\|_{ L^2_t L^{\f{2d}{d-2}}_x (I_j \times \mathbb{R}^d) }
    &\le
    \|v\|_{S(I_j) }^{\f{6-d}{d-2}} \|\nabla v_t\|_{
    L^{\f{2d+4}{d-2}}_tL^{\f{2d(d+2)}{d^2+4}}_x(I_j \times \mathbb{R}^d)
    }\\
    &\quad \times\|\nabla v\|_{ L^{\f{2d+4}{d-2}}_tL^{\f{2d(d+2)}{d^2+4}}_x(I_j \times \mathbb{R}^d)}\\
      &\le \delta_0^{\f 4{d-2}} \|\nabla v_t\|_{L^{\infty}_t L^2_x \bigcap L^2_t L^{\f{2d}{d-2}}_x (I_j \times \mathbb{R}^d)}.
    \end{aligned}
  \end{align}
 For the second term on the right hand side of  \eqref{est:h_s5},  we may apply Sobolev's inequality and interpolation to get  \begin{align}\label{est:h_s8}
    \begin{aligned}
      \left\|\nabla f
      \left(v \left(t_{j-1} \right)
 \right)\right\|_{L^2}
      &\lesssim \|v(t_{j-1})\|_{L^{\f{4d}{d-2}}}^{\f4{d-2}} \|u(t_{j-1})\|_{\dot{H}^2}
      \lesssim \|v(t_{j-1})\|_{\dot{H}^1}^{\f4{d-2}}\|v(t_{j-1})\|_{\dot{H}^3}.
    \end{aligned}
  \end{align}
Substituting
  \eqref{est:h_s6} -- \eqref{est:h_s8} into \eqref{est:h_s5}, with the smallness of $\delta$, we infer that
\begin{align}\label{est:h_s9}
     \|\nabla v_t\|_{L^{\infty}_t L^2_x \bigcap L^2_t L^{\f{2d}{d-2}}_x (I_j \times \mathbb{R}^d)} & \lesssim \|v(t_{j-1})\|_{\dot{H}^3} \left(1+ \|v(t_{j-1})\|_{\dot{H}^1}^{\f4{d-2}} \right).
  \end{align}
    In particular, we have
  \begin{align*}
     \|\nabla v_t\|_{L^{\infty}_t L^2_x \bigcap L^2_t L^{\f{2d}{d-2}}_x (I_1 \times \mathbb{R}^d)} \lesssim \|v_0\|_{\dot{H}^3} \left(1+ \|v_0\|_{\dot{H}^1}^{\f4{d-2}} \right).
  \end{align*}
  Since
  \begin{align*}
    -\nabla\Delta v = -\nabla u_t - \mu \nabla f(v),
  \end{align*}
  it follows from \eqref{est:h_s8} that
  \begin{align*}
  \begin{aligned}
    \|\nabla\Delta v\|_{L^{\infty}_t L^2_x (I_1 \times \mathbb{R}^d)} & \lesssim
    \|\nabla v_t\|_{L^{\infty}_t L^2_x (I_1 \times \mathbb{R}^d)} + \left\|\nabla
    f(v)
     \right\|_{L^{\infty}_t L^2_x (I_1 \times \mathbb{R}^d)}
  \le  C\left(\|v_0\|_{\dot{H}^1} \right)\|v_0\|_{\dot{H}^3},
  \end{aligned}
  \end{align*}
  which implies
\begin{align}\label{est:h_s10}
    \|v(t_1)\|_{\dot{H}^3}
    \le     C \left(\|v_0\|_{\dot{H}^1} \right)\|v_0\|_{\dot{H}^3}.
  \end{align}
   Combining \eqref{est:h_s8}, \eqref{est:h_s9} with \eqref{est:h_s10} leads to  the estimate 
  \begin{align*}
    \|\nabla\Delta v\|_{L^{\infty}_t L^2_x (I_2 \times \mathbb{R}^d)}\le  C \left(\|v_0\|_{\dot{H}^1} \right)\|v_0\|_{\dot{H}^3}.
  \end{align*}
  Hence, by a finite induction argument for $j=1, \ldots, N$, we find
  \begin{align*}
    \|\nabla\Delta v\|_{L^{\infty}_t L^2_x (\mathbb{R}_+ \times \mathbb{R}^d)}\le  C \left(\|v_0\|_{\dot{H}^1} \right)\|v_0\|_{\dot{H}^3},
      \end{align*}
     which is \eqref{eq3.16v16} when $k = 3$.
  \end{proof}

\subsection{Proof of Theorem \ref{thm:f_schrodinger}}

Set $ \tilde{w} =  v^\theta - v $. Then $\tilde{w}$ satisfies
\begin{align}\label{eq:differen2}
  \begin{cases}
e^{-i\theta} \tilde{w}
_t -  \Delta \tilde{w}
= \mu \left( f(v^\theta) - f(v) \right)
-  \left(e^{-i\theta}-i \right) \left(\Delta v - \mu f(v)
\right), \\
\tilde{w} (0,x) = \tilde{w}_0
(x) =  v^\theta_0(x) - v_0(x).
\end{cases}
\end{align}

\begin{lemma}\label{prop:df_diff2}
  Let $d = 3, 4$. Suppose that $ \tilde{w} $ satisfies \eqref{eq:differen2}. There exists a small $\varepsilon > 0 $ such that if $\| \tilde{w}_0 \|_{H^1} \le \varepsilon$, then for each $T>0$, we have 
\begin{align}\label{est:df_w-p}
    \| \tilde{w}  \|_{S([0, T]) } \le C=C\left(T, \left\| \tilde{w}_0 \right\|_{\dot{H}^1}, \|v_0\|_{\dot{H}^3} \right).
  \end{align}
\end{lemma}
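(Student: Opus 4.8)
The plan is to mirror the argument of Lemma~\ref{prop:df_diff} for the heat case, replacing the role played there by the parabolic higher regularity (Proposition~\ref{prop:higher_heat}) with the dispersive higher regularity of the Schr\"odinger flow (Proposition~\ref{prop:higher_nls}). First I would invoke the global well-posedness and scattering theorem for four-dimensional focusing energy-critical NLS to fix, for $N=N(\|v_0\|_{\dot H^1})$ \emph{independent of} $\theta$, a partition $[0,T]=\bigcup_{j=1}^N I_j$ on which
\[
\|v\|_{S(I_j)}+\|\nabla v\|_{L^{\f{2(d+2)}{d-2}}_t L^{\f{2d(d+2)}{d^2+4}}_x(I_j\times\mathbb R^d)}\le \delta_0
\]
for a small $\delta_0$ to be chosen. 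On each slab I would apply the Strichartz estimate of Lemma~\ref{lem:strichartz} to the difference equation \eqref{eq:differen2}, splitting the source into the genuinely nonlinear part $\mu\bigl(f(v^\theta)-f(v)\bigr)$ and the ``defect'' $(e^{-i\theta}-i)\bigl(\Delta v-\mu f(v)\bigr)$, which measures the failure of the NLS solution $v$ to solve CGL and carries the small factor $|e^{-i\theta}-i|$ vanishing in the inviscid limit.

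Next I would estimate the two pieces separately. For the nonlinear difference I would use the same pointwise bound as in Lemma~\ref{prop:df_diff},
\[
\bigl|\nabla\bigl(f(v^\theta)-f(v)\bigr)\bigr|\lesssim |\tilde w+v|^{\f4{d-2}}|\nabla\tilde w|+\bigl(|\tilde w|^{\f{6-d}{d-2}}+|v|^{\f{6-d}{d-2}}\bigr)|\tilde w|\,|\nabla v|,
\]
so that H\"older's inequality, the Sobolev embedding and the slab smallness produce terms that are either superlinear in $\|\nabla\tilde w\|$ or carry a factor $\delta_0$, and are therefore absorbable. For the defect I would invoke Proposition~\ref{prop:higher_nls} with $v_0\in H^3$ to control $\|\nabla\Delta v\|_{L^\infty_t L^2_x}$ and $\|\nabla f(v)\|_{L^\infty_t L^2_x}$ by $C(\|v_0\|_{\dot H^1})\|v_0\|_{\dot H^3}$, which on each slab gives
\[
\|\nabla\Delta v\|_{L^1_t L^2_x(I_j\times\mathbb R^d)}\le |I_j|\,\|\nabla\Delta v\|_{L^\infty_t L^2_x(I_j\times\mathbb R^d)}.
\]

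The crucial structural difference from the heat case enters precisely here, and I expect it to be the main obstacle. Whereas for NLH the parabolic smoothing of Proposition~\ref{prop:higher_heat} supplies an $L^2_t\dot H^3_x$ bound and hence a favorable factor $|I_j|^{1/2}$ on the defect, the conservative Schr\"odinger evolution provides only the $L^\infty_t\dot H^3_x$ control of Proposition~\ref{prop:higher_nls}, forcing the full power $|I_j|$ above. This is exactly why the present statement demands one more derivative on the data ($v_0\in H^3$ rather than $u_0\in H^2$) and why the eventual estimate in Theorem~\ref{thm:f_schrodinger} carries $(T+1)$ and the restriction $T<\epsilon|e^{i\theta}-i|^{-1}$ in place of the square-root versions of Theorem~\ref{thm:df_heat}. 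Feeding in the $H^3$ regularity cleanly, and accepting the loss of the half-power in time, is the delicate point; everything else is routine.

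Finally, on $I_1$ I would use $\tilde w(t_0)=\tilde w_0$ with $\|\tilde w_0\|_{\dot H^1}\le\varepsilon$ and choose $\varepsilon$ and $T$ so small that $|e^{-i\theta}-i|\,T\le\varepsilon/4$; a standard continuity (bootstrap) argument then gives
\[
\|\nabla\tilde w\|_{L^\infty_t L^2_x\cap L^{\f{2(d+2)}{d-2}}_t L^{\f{2d(d+2)}{d^2+4}}_x(I_1\times\mathbb R^d)}\le 2C\varepsilon,
\]
and in particular $\|\nabla\tilde w(t_1)\|_{L^2}\le 2C\varepsilon$. Iterating over the finite partition $j=1,\dots,N$ produces a bound that grows at most like $(4C)^N\varepsilon$; since $N$ depends only on $\|v_0\|_{\dot H^1}$, choosing $\varepsilon$ small enough that $(4C)^N\varepsilon\le\delta_0$ closes the induction and yields a constant of the form $C\bigl(T,\|\tilde w_0\|_{\dot H^1},\|v_0\|_{\dot H^3}\bigr)$. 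Sobolev's inequality then upgrades this gradient control to the $S([0,T])$ norm, which is \eqref{est:df_w-p}.
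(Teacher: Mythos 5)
Your proposal is correct and follows essentially the same route as the paper's proof: the same partition of $[0,T]$ into $N=N(\|v_0\|_{\dot H^1})$ slabs, the same Strichartz estimate applied to \eqref{eq:differen2} with the source split into the nonlinear difference and the defect term, the same use of Proposition \ref{prop:higher_nls} to bound $\|\nabla\Delta v\|_{L^1_tL^2_x(I_j\times\mathbb{R}^d)}\le |I_j|\,\|v\|_{L^\infty_t\dot H^3_x}$, and the same continuity-plus-finite-induction closing step with $(4C)^N\varepsilon\le\delta_0$ and a final Sobolev embedding. Your remark on the structural loss (only $L^\infty_t\dot H^3_x$ control for NLS, hence the factor $|I_j|$ rather than $|I_j|^{1/2}$, explaining the $H^3$ hypothesis and the constraint $T\lesssim\varepsilon|e^{i\theta}-i|^{-1}$) is exactly the point the paper's proof exploits.
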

\begin{proof}
Following the argument as in the proof of Lemma \ref{prop:df_diff}, we may divide $[0, T]$ into a finite number $N$ of disjoint intervals $I_j$ 
such that the solution $v$ of \eqref{eq:s} satisfies
\begin{align}\label{est:df_w-p1}
  \|v\|_{S(I_j) }+ \|\nabla v\|_{L^{\f{2d+4}{d-2}}_t L^{\f{2d(d+2)}{d^2+4}}_x(I_j \times \mathbb{R}^d)} \le \delta_0 .
\end{align}
By the Strichartz estimate, we have
\begin{align}\label{est:df_w-p2}
\begin{aligned}
    & \left\|\nabla \tilde{w} \right\|_{L^\infty_t L^2_x(I_j \times \mathbb{R}^d)} + \left\|\nabla \tilde{w}
    \right\|_{L^{\f{2(d+2)}{d-2}}_t L^{\f{2d(d+2)}{d^2+4}}_x(I_j \times \mathbb{R}^d)} \\
    & \lesssim \left\| \tilde{w}
    (t_{j-1}) \right\|_{\dot{H}^1}+ |e^{i\theta} -i|
    \left( \left\|\nabla\Delta v \right\|_{L^1_tL^2_x(I_j \times \mathbb{R}^d)} +  \left\|\nabla
    f(v)
    \right\|_{L^2_t L_x^{\f{2d}{d+2}}(I_j \times \mathbb{R}^d)} \right)\\
    & \quad + \left\|\nabla
    \left( f \left(v^\theta \right) - f(v)
    \right) \right\|_{L^2_t L_x^{\f{2d}{d+2}}(I_j \times \mathbb{R}^d)}.
\end{aligned}
\end{align}
The terms on the right-hand side of \eqref{est:df_w-p2} can be estimated as follows. First of all, we have
\begin{align}\label{est:df_w-p3}
  \left\|\nabla\Delta v \right\|_{L^1_tL^2_x(I_j \times \mathbb{R}^d)} \le |I_j| \|v\|_{L^\infty_t\dot{H}^3_x(I_j \times \mathbb{R}^d)}.
\end{align}
Secondly, we may follow the same argument as \eqref{est:df_w3}--\eqref{est:df_w5} to obtain
\begin{align}\label{est:df_w-p4}
    \begin{aligned}
      \left\|\nabla f(v)
      \right\|_{L^2_t L_x^{\f{2d}{d+2}}(I_j \times \mathbb{R}^d)} \lesssim \delta_0^{\f{d+2}{d-2}},
    \end{aligned}
  \end{align}
  and
  \begin{align}\label{est:df_w-p5}
    \begin{aligned}
      & \left\|\nabla \left( f \left(v^\theta \right) - f(v)       \right) \right\|_{L^2_t L_x^{\f{2d}{d+2}}(I_j \times \mathbb{R}^d)} \\
      &\lesssim \left\|\nabla \tilde{w}
      \right\|_{L^{\f{2d+4}{d-2}}_t L^{\f{2d(d+2)}{d^2+4}}_x(I_j \times \mathbb{R}^d)}^{\f{d+2}{d-2}} + \delta_0^{\f4{d-2}}  \left\|\nabla \tilde{w} \right\|_{L^{\f{2d+4}{d-2}}_t L^{\f{2d(d+2)}{d^2+4}}_x(I_j \times \mathbb{R}^d)} \\
       &\quad + \delta_0  \left\|\nabla \tilde{w} \right\|_{L^{\f{2d+4}{d-2}}_t L^{\f{2d(d+2)}{d^2+4}}_x(I_j \times \mathbb{R}^d)}^{\f4{d-2}}.
    \end{aligned}
  \end{align}
  When $j=1$, $ \tilde{w}(t_0) = \tilde{w}_0$. Combining \eqref{est:df_w-p2}--\eqref{est:df_w-p5} with Proposition \ref{prop:higher_nls}, we get 
\begin{align}\label{est:df_w-p6}
    \begin{aligned}
      &\|\nabla \tilde{w}   \|_{L^\infty_t L^2_x(I_1 \times \mathbb{R}^d)} + \|\nabla \tilde{w}  \|_{L^{\f{2(d+2)}{d-2}}_t L^{\f{2d(d+2)}{d^2+4}}_x(I_1 \times \mathbb{R}^d)} \\
    & \le  C\| \tilde{w}_0 \|_{\dot{H}^1}+ C|e^{i\theta} -i|
    \left(T C \left(\|v_0\|_{\dot{H}^1} \right)\|v_0\|_{\dot{H}^3} + \delta_0^{\f{d+2}{d-2}} \right)\\
    &\quad + C\|\nabla \tilde{w}     \|_{L^{\f{2d+4}{d-2}}_t L^{\f{2d(d+2)}{d^2+4}}_x(I_1 \times \mathbb{R}^d)}^{\f{d+2}{d-2}} + C\delta_0^{\f4{d-2}}
    \|\nabla \tilde{w}     \|_{L^{\f{2d+4}{d-2}}_t L^{\f{2d(d+2)}{d^2+4}}_x(I_1 \times \mathbb{R}^d)}\\
      &\quad + C\delta_0  \|\nabla \tilde{w}  \|_{L^{\f{2d+4}{d-2}}_t L^{\f{2d(d+2)}{d^2+4}}_x(I_1 \times \mathbb{R}^d)}^{\f4{d-2}}
    \end{aligned}
  \end{align}
  with a universal constant $C$.

  We choose a small positive constant $\varepsilon$ such that $\| \tilde{w}_0 \|_{\dot{H}^1} \le \f\varepsilon2$ and $|e^{i\theta}-i| T \le \f\varepsilon4$. By a standard continuity method, \eqref{est:df_w-p6} is reduced to
\begin{align}\label{est:df_w-p7}
    \|\nabla \tilde{w} \|_{L^\infty_t L^2_x(I_1 \times \mathbb{R}^d)} + \|\nabla \tilde{w} \|_{L^{\f{2(d+2)}{d-2}}_t L^{\f{2d(d+2)}{d^2+4}}_x(I_1 \times \mathbb{R}^d)} \le 2C\varepsilon,
  \end{align}
 which also gives the bound $\|\nabla \tilde{w}
    (t_1)\|_{L^2} \le 2C\varepsilon$. Hence by a finite induction argument corresponding to the finite partition of $[0, T]$, we may deduce
  \begin{align*}
    \|\nabla \tilde{w} \|_{L^\infty_t L^2_x(I_N \times \mathbb{R}^d)} + \|\nabla \tilde{w}  \|_{L^{\f{2(d+2)}{d-2}}_t L^{\f{2d(d+2)}{d^2+4}}_x(I_N \times \mathbb{R}^d)} \le (4C)^N\varepsilon.
  \end{align*}
  If we restrict $\varepsilon$ to be small such that $(4C)^N\varepsilon \le \delta$, then we could derive
  \begin{align*}
    \|\nabla \tilde{w} \|_{L^\infty_t L^2_x([0,T] \times \mathbb{R}^d)} + \|\nabla \tilde{w}  \|_{L^{\f{2(d+2)}{d-2}}_t L^{\f{2d(d+2)}{d^2+4}}_x([0,T] \times \mathbb{R}^d)} \le C    \left(T,\| \tilde{w}_0\|_{\dot{H}^1}, \|v_0\|_{\dot{H}^3}\right).
  \end{align*}
The desired estimate \eqref{est:df_w-p} then follows from Sobolev's inequality.
\end{proof}

We can now give the proof of Theorem \ref{thm:f_schrodinger}. Since a large part of the proof  is very similar to that used in Theorem \ref{thm:df_heat}, we shall only give a sketch here and point out the differences.

\begin{proof}[Sketch for the proof of Theorem \ref{thm:f_schrodinger}.]
We may follow the proof of Theorem \ref{thm:df_heat}, except that we use
\begin{itemize}
\item[i)] $|e^{i\theta} - i|$ to replace $|e^{i \theta} - 1| $ in \eqref{est:main1_1}, \eqref{est:main1_3}, \eqref{est:main1_4}, \eqref{est:main1_5}, \eqref{eq3.33v21};

\item[ii)] $|I_j|\|\Delta u\|_{L^\infty_tL^2_x(I_j \times \mathbb{R}^d)}$ to replace $|I_j|^{1/2} \|\Delta u\|_{L^2_tL^2_x(I_j \times \mathbb{R}^d)}$ in \eqref{est:main1_3}, \eqref{est:main1_4}, \eqref{est:main1_5};

\item[iii)] $T \|\Delta u\|_{L^\infty_tL^2_x([0, T] \times \mathbb{R}^d)}$ to replace $T^{1/2} \|\Delta u\|_{L^2_tL^2_x([0, T] \times \mathbb{R}^d)} $ in \eqref{eq3.33v21};

\item[iv)] Lemma \ref{prop:df_diff2} for the NLS instead of Lemma \ref{prop:df_diff} for the NLH.
\end{itemize}
\end{proof}
\medskip 

\noindent \textbf{Acknowledgments. }
The authors are grateful to Prof.~Lifeng Zhao for very helpful discussions on related topics.

\end{document}